\theoremstyle{plain}
\newtheorem{lemma}{Lemma}[section]
\newtheorem{corollary}[lemma]{Corollary}
\newtheorem{definition}[lemma]{Definition}
\newtheorem{example}[lemma]{Example}
\newtheorem{notation}[lemma]{Notation}
\newtheorem{proposition}[lemma]{Proposition}
\newtheorem{remark}[lemma]{Remark}
\newtheorem{theorem}[lemma]{Theorem}
\numberwithin{equation}{section}
\numberwithin{figure}{section}
\begin{document}
\def\arraystretch{2.2}
\def\arraystretch{2.2}
\def\arraystretch{2.2}
\def\arraystretch{2.2}

\title[Dimensions of Non-equicontractive measures]{Local dimensions of
measures of finite type III - Measures that are not equicontractive }
\author[K.~E.~Hare, K.~G.~Hare, G. Simms]{Kathryn E. Hare, Kevin G. Hare,
Grant Simms}
\thanks{Research of K. G. Hare was supported by NSERC Grant RGPIN-2014-03154}
\thanks{Research of K. E. Hare was supported by NSERC Grant 2016 03719}
\thanks{Research of G. Simms was supported by NSERC Grant 2016 03719}
\address{Dept. of Pure Mathematics, University of Waterloo, Waterloo, Ont.,
N2L 3G1, Canada}
\email{kehare@uwaterloo.ca}
\address{Dept. of Pure Mathematics, University of Waterloo, Waterloo, Ont.,
N2L 3G1, Canada}
\email{kghare@uwaterloo.ca}
\address{Dept. of Pure Mathematics, University of Waterloo, Waterloo, Ont.,
N2L 3G1, Canada}
\email{grantjsimms@gmail.com}
\subjclass[2000]{Primary 28A80; secondary 28C10}
\keywords{local dimension, finite type, multifractal analysis, IFS}
\thanks{This paper is in final form and no version of it will be submitted
for publication elsewhere.}

\begin{abstract}
We extend the study of the multifractal analysis of the class of equicontractive
self-similar measures of finite type to the non-equicontractive setting.
Although stronger than the weak separation condition, the finite type property includes examples of IFS that fail the open set condition.

The important combinatorial properties of equicontractive self-similar measures of finite type are extended to the non-equicontractive setting
    and we prove that many of the results from the equicontractive case carry over to this 
    new, more general, setting. In particular, previously it was shown that if an equicontractive self-similar measure of finite type was {\em regular},
    then the calculations of local dimensions were relatively easy.
We modify this definition of regular to define measures to be {\em generalized regular}.
This new definition will include the non-equicontractive case and obtain 
similar results. Examples are studied
of non-equicontractive self-similar generalized regular measures, as well as equicontractive 
    self-similar measures which generalized regular in this new sense, but which are not regular.
\end{abstract}

\maketitle

\section{Introduction}
\label{sec:intro}

Building on the work of Feng \cite{F3, F1, F2}, the first two authors, with various coauthors, studied in \cite{HHM} and \cite{HHN} the multifractal analysis 
of equicontractive, self-similar measures of finite type. This analysis can be
much more complicated than the well understood case when the self-similar
measures satisfy the open set condition. Indeed, the set of local dimensions
of such measures need not be an interval, as was first discovered by Hu and
Lau in \cite{HL} for the special example of the 3-fold convolution of the
classical Cantor measure.  This was further explored by a number of authors, including \cite{FLW, LW, Sh}. In this paper, we extend this theory to non-equicontractive Iterated Function Systems (IFS) of finite type and
their associated self-similar measures. 

The notion of finite type was introduced by Ngai and Wang in \cite{NW}. As they showed, it
includes all IFS satisfying the open set condition where the contraction factors are logarithmically commensurable. 
In addition, finite type also permits some
overlap. We extend the geometric ideas developed by Feng for equicontractive
IFS of finite type, including introducing suitable generalizations of
characteristic vectors, transition matrices and the essential class. Using
these generalizations, the main results of \cite{HHM} are extended to
non-equicontractive, finite type, self-similar measures that are
`comparable'. In particular, the set of local dimensions at points in the
essential class is seen to be a closed interval, and the local dimensions at
the `periodic points' in that class are dense. The essential class always
exists, is unique and is of full measure, both with respect to the 
self-similar measure under investigation, and with respect to the Hausdorff
measure of the support of this self-similar measure. Further, the same formulas hold
for the local dimensions at periodic points as in the equicontractive case (even without comparability).

In order to give criteria that ensures comparability, we also introduce a
notion of `generalized regular' that is defined in terms of the geometry of the IFS and
the probabilities. This property is shown to be more general than the earlier
notion of regular for equicontractive IFS and implies comparability.

The paper is organized as follows. In Section \ref{sec:basic} we develop the geometric
properties of (non-equicontractive) self-similar measures and IFS of finite type. We study the
connection between transition matrices and local dimensions, and establish
the formula for the local dimension of periodic points in Section \ref{sec:local}.
Comparability and generalized regularity are studied in Section \ref{sec:comparable}. The main results on
the structure of the set of attainable local dimensions are also found
there. 

In Section \ref{sec:examples} we consider examples. The (biased) Bernoulli convolution with
contraction the inverse of the golden mean, $r$, arises from the IFS, $%
S_{0}(x)=rx,S_{1}(x)=rx+1-r,$ and probabilities $p, 1-p$. We compare this
measure with the one arising from the (non-equicontractive) IFS, $S_{0}(x)=rx,
$ $S_{1}(x)=1-rx$, and with the same probabilities. The two measures coincide if $%
p=1-p=\frac{1}{2}$. A detailed study is made of their local dimensions when $p<1-p$. In particular,
we show that while both measures admit a closed interval and an isolated
point as their set of attainable local dimensions, the two intervals are
different. Next we give an example of an IFS of finite type
with different (but, positive) contraction factors and an associated,
generalized regular, self-similar measure with the property that the essential class is
the full self-similar set $[0,1]$.  Lastly, we give an equicontractive example
that illustrates our new notion of generalized regular is more encompassing
than the previous notion of regular. 

\section{Basic Definitions and Terminology}
\label{sec:basic}

We begin by introducing the notion of finite type and the related concepts
and terminology that will be used throughout the paper.

\subsection{Iterated function systems and finite type}

By an iterated function system (IFS) we will mean a finite set of
contractions,%
\begin{equation}
S_{j}(x)=r_{j}x+d_{j}:\mathbb{R\rightarrow R}\text{ for }j=0,1,...,k,
\label{IFS}
\end{equation}%
where $k\geq 1$ and $0<$ $\left\vert r_{j}\right\vert <1$. We do not assume $%
r_{j}>0$. When all $r_{j}$ are equal and positive, the IFS is referred to as
equicontractive.

Each IFS generates a unique invariant compact set $K,$ known as its
associated self-similar set, satisfying 
\begin{equation*}
K=\bigcup_{j=0}^{k}S_{j}(K).
\end{equation*}%
By rescaling the $d_{j},$ if necessary, we can assume the convex hull of $K$
is $[0,1]$.

We let $\Sigma $ be the set of all finite words on the alphabet $%
\{0,1,...,k\}$. Given $\sigma =(\sigma _{1},...,\sigma _{j})\in \Sigma $, a
word of length $j$, we will let 
\begin{eqnarray*}
\sigma ^{-} &=&(\sigma _{1},...,\sigma _{j-1}), \\
S_{\sigma } &=&S_{\sigma _{1}}\circ S_{\sigma _{2}}\circ \cdot \cdot \cdot
\circ S_{\sigma _{j}},
\end{eqnarray*}%
and
\begin{equation*}
r_{\sigma }=\prod_{i=1}^{j}r_{\sigma _{i}}.
\end{equation*}%
For each $\sigma \in \Sigma $ , $S_{\sigma }[0,1]$ is a subinterval of $%
[0,1] $ of length $r_{\sigma }$; we call this a \textit{basic interval}. Of
course, in the equicontractive case, all basic intervals arising from words
of length $n$ have length $r^{n}$, where $r$ is the common contraction
factor. In the non-equicontractive case this will not be true. It is
convenient to group basic intervals of about the same length and so we
introduce the following notation: Let 
\begin{equation*}
r_{\min }=\min_{j}\left\vert r_{j}\right\vert
\end{equation*}%
and for $n \ge 1$  put 
\begin{equation*}
\Lambda _{n}=\{\sigma \in \Sigma :\left\vert r_{\sigma }\right\vert \leq
r_{\min }^{n}\text{ and }\left\vert r_{\sigma ^{-}}\right\vert >r_{\min
}^{n}\}.
\end{equation*}%
We let $\Lambda_0$ be the set consisting of only the empty word. One can
check that 
\begin{equation*}
K=\bigcup_{\sigma \in \Lambda _{n}}S_{\sigma }(K)\text{ for each }n.
\end{equation*}

If $\sigma \in \Lambda _{n}$, we will say $\sigma $ is of \textit{generation 
}$n$\textit{\ }and $S_{\sigma }[0,1]$ will be called a \textit{basic
interval of generation }$n$. In the non-equicontractive case there can be
some finite words that are not of any generation.  If $\sigma =(\sigma
_{1},\sigma _{2},...)$ is any infinite word, then for each $n$ there is a
choice of $i_{n}$ ($\geq n)$ such that $(\sigma _{1},...,\sigma _{i_{n}})$
is of generation $n$.

The notion of finite type was introduced by Ngai and Wang in \cite{NW}. The
definition we will use is slightly less general, but is simpler and includes
all the examples in $\mathbb{R}$ that we are aware of.

\begin{definition}
Assume $\{S_{j}\}$ is an IFS as in equation \eqref{IFS}. The words $\sigma ,\tau
\in \Lambda _{n}$ are said to be neighbours if $S_{\sigma }(0,1)\cap S_{\tau
}(0,1)\neq \emptyset $. Denote by $\mathcal{N}(\sigma )$ the set of all
neighbours of $\sigma $. We say that $\sigma \in \Lambda _{n}$ and $\tau \in
\Lambda _{m}$ have the same neighbourhood type if there is a map $f(x)=\pm
r_{\min }^{n-m}x+c$ such that 
\begin{equation*}
\{f\circ S_{\eta }:\eta \in \mathcal{N}(\sigma )\}=\{S_{\nu }:\nu \in 
\mathcal{N}(\tau )\} \ \ \mathrm{and}\ \ f \circ S_\sigma = S_\tau.
\end{equation*}%
The IFS is said to be of \textbf{finite type} if there are only finitely
many neighbourhood types.
\end{definition}

This is equivalent to the condition of finite type given for equicontractive
IFS in \cite{F3}. It was shown in \cite{Ng} that an IFS of finite type
satisfies the weak separation condition, but not necessarily the open set
condition.

\begin{example}
The IFS given by $S_{j}(x)=\pm \rho ^{-n_{j}}x+b_{j}$ where $\rho $ is a
Pisot number, $n_{j}\in \mathbb{N}$ and $b_{j}\in \mathbb{Q[\rho ]}$ was
shown to be of finite type in \cite[Thm. 2.9]{NW}.
\end{example}

\begin{remark}\label{commensurate}
An important fact about iterated function systems of finite type is that the set of contractions
are commensurate, meaning, there are rational numbers $q_{j}$ such that $%
\left\vert r_{j}\right\vert ^{q_{j}}=r_{min}$ for all $j=1,..,k$. To see
this, assume the contrary, that there exists an IFS whose contractions
are multiplicatively independent. Then there exists an $i$ such that $r_i$ is
not multiplicatively dependent with $r_{min}$. Let $\nu_n$ be the word $%
i^{k_n}$ such that $|r_i|^{k_n} \leq r_{min}^n < |r_i|^{k_n-1}$. As $r_i$ is
multiplicatively independent of $r_{min}$ we see that for all $n \neq m $
that there does not exist an $f(x) = \pm r_{min}^{n-m}x + c$ where $f\circ
S_{\nu_n} = S_{\nu_m}$. Hence there must be an infinite number of
neighbourhood types, a contradiction. 
\end{remark}

Assume we are given probabilities $p_{j}>0$ satisfying $%
\sum_{j=0}^{k}p_{j}=1 $. There is a unique self-similar measure $\mu $
associated with the IFS $\{S_{j}\}_{j=0}^{k}$ and probabilities $%
\{p_{j}\}_{j=0}^{k}$ satisfying the rule 
\begin{equation*}
\mu =\sum_{j=0}^{k}p_{j}\mu \circ S_{j}^{-1}.
\end{equation*}%
This non-atomic probability measure $\mu $ has support the associated
self-similar set. Our interest is to study the dimensional properties of
self-similar measures associated with IFS of finite type.
Given $\sigma = (\sigma_1, \sigma_2, \dots, \sigma_j) \in \Sigma$, 
   we define \[ p_{\sigma }=\prod_{i=1}^{j}p_{\sigma _{i}}. \]

\subsection{Net intervals and characteristic vectors}

Next, we extend the definitions of net intervals and characteristic vectors
from equicontractive finite type IFS, to the non-equicontractive case.

\begin{definition}
For each positive integer $n$, let $h_{1},\dots ,h_{s_{n}}$ be the
collection of elements of the set $\{S_{\sigma }(0),$ $S_{\sigma }(1):\sigma
\in \Lambda _{n}\}$, listed in increasing order. Put 
\begin{equation*}
\mathcal{F}_{n}=\{[h_{j},h_{j+1}]:1\leq j\leq s_{n}-1\text{ and }%
(h_{j},h_{j+1})\cap K\neq \emptyset \}\text{.}
\end{equation*}%
Elements of $\mathcal{F}_{n}$ are called \textbf{net intervals of generation 
}$n$. The interval $[0,1]$ is understood to be the only net interval of
generation $0$.
\end{definition}

For each $\Delta \in \mathcal{F}_{n}$, $n\geq 1$, there is a unique element $%
\widehat{\Delta }\in \mathcal{F}_{n-1}$ which contains $\Delta ,$ called the 
\textit{parent} (of \textit{child} $\Delta )$. Given $\Delta =[a,b]\in 
\mathcal{F}_{n}$, we denote the \textit{normalized length} of $\Delta $ by 
\begin{equation*}
\ell _{n}(\Delta )=r_{\min }^{-n}(b-a)\text{.}
\end{equation*}%
By the \textit{neighbour set} of $\Delta $ we mean the ordered tuple 
\begin{equation*}
V_{n}(\Delta )=((a_{1},L_{1}),(a_{2},L_{2}),\dots ,(a_{j},L_{j})),
\end{equation*}%
where for each $i$ there is some $\sigma \in \Lambda _{n}$ such that 
$r_{\min }^{-n}r_{\sigma }=L_{i}$ and $r_{\min }^{-n}(a-S_{\sigma}(0))=a_{i}$.
That is, $L_{i}$ is the normalized length of the basic interval $S_{\sigma }[0,1]$. 
It is worth noting that $r_\sigma$, and hence $L_i$, may be negative.  Equivalently, there exists $\sigma \in \Lambda _{n}$ such that%
\begin{equation*}
S_{\sigma }(x)=r_{\min }^{n}(L_{i}x-a_{i})+a.
\end{equation*}%
We will order these tuples so that $a_{i}\leq a_{i+1}$ and if $a_{i}=a_{i+1},$ then 
$L_{i} < L_{i+1}$. (The precise order is not essential, although some consistent ordering is required.) Abusing terminology slightly, we will refer to one of
these $a_{i}$ (or a pair $(a_{i},L_{i}))$ as a \textit{neighbour} of $\Delta
. $

Suppose  $\Delta \in \mathcal{F}_n$ has parent $\widehat \Delta$.
It is possible for $\widehat \Delta$ to have multiple children with the 
    same normalized length and the same neighbourhood set as $\Delta$.
Order these equivalent children from left to right as $\Delta_1, \Delta_2, 
\dots, \Delta_t$.
We denote by $t_n(\Delta)$ the integer $t$ such that $\Delta_t = \Delta$.

\begin{definition}
The \textbf{characteristic vector}\textit{\ of }$\Delta \in \mathcal{F}_{n}$
is defined to be the triple 
\begin{equation*}
\mathcal{C}_{n}(\Delta )=(\ell _{n}(\Delta ),V_{n}(\Delta ),t_{n}(\Delta )).
\end{equation*}%
The characteristic vector of $[0,1]$ is defined as $(1,(0,1),1)$.
\end{definition}

Often we suppress $t_{n}(\Delta )$ giving the \textit{reduced characteristic
vector} $(\ell _{n}(\Delta ),V_{n}(\Delta ))$.

By the\textit{\ symbolic representation} of a net interval $\Delta \in 
\mathcal{F}_{n}$ we mean the $n+1$ tuple $(\mathcal{C}_{0}(\Delta _{0}),...,%
\mathcal{C}_{n}(\Delta _{n}))$ where $\Delta _{0}=[0,1]$, $\Delta
_{n}=\Delta $, and for each $j=1,...,n$, $\Delta _{j-1}$ is the parent of $%
\Delta _{j}$. Similarly, for each $x\in \lbrack 0,1]$, the symbolic
representation of $x$ will be the sequence of characteristic vectors%
\begin{equation*}
\lbrack x]=(\mathcal{C}_{0}(\Delta _{0}),\mathcal{C}_{1}(\Delta _{n}),...)
\end{equation*}%
where $x\in \Delta _{n}\in \mathcal{F}_{n}$ for each $n$ and $\Delta _{j-1}$
is the parent of $\Delta _{j}$. The symbolic representation uniquely
determines $x$ and is unique unless $x$ is the endpoint of some net
interval, in which case there can be two different symbolic representations.
We will write $\Delta _{n}(x)$ for a net interval of generation $n$
containing $x$.

Here are some important properties of characteristic vectors.

\begin{lemma}
\label{lem:children}
The characteristic vector of any child of the net interval $\Delta $ is
determined by the normalized length and neighbour set of $\Delta $.
\end{lemma}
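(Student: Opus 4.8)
The plan is to show that the defining data of a child $\Delta$ of $\widehat\Delta$—its normalized length $\ell_n(\Delta)$, its neighbour set $V_n(\Delta)$, and its index $t_n(\Delta)$ among equivalent siblings—can all be recovered from $\ell_{n-1}(\widehat\Delta)$ and $V_{n-1}(\widehat\Delta)$ alone. First I would fix $\widehat\Delta = [a,b] \in \mathcal{F}_{n-1}$ and recall that each entry $(a_i, L_i)$ of $V_{n-1}(\widehat\Delta)$ records a basic interval $S_\sigma[0,1]$ with $\sigma \in \Lambda_{n-1}$ meeting $(a,b)$, via $S_\sigma(x) = r_{\min}^{n-1}(L_i x - a_i) + a$. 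The key observation is that the basic intervals of generation $n$ that meet the interior of $\widehat\Delta$ are exactly the images $S_{\sigma\eta}[0,1]$ where $\sigma$ ranges over the neighbours of $\widehat\Delta$ and $\eta$ is chosen so that $\sigma\eta \in \Lambda_n$; and whether $\sigma\eta \in \Lambda_n$ depends only on $|r_\sigma|$ (equivalently on $|L_i|$ and the generation gap, which is known) together with the intrinsic data of the maps $\{S_j\}$. Consequently the multiset of pairs (position of left endpoint relative to $a$, normalized length) of all generation-$n$ basic intervals hitting $(a,b)$ is a function of $V_{n-1}(\widehat\Delta)$ only—the rescaling by $r_{\min}$ between generations is universal, and $a$ drops out once we normalize.

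Next I would extract the net intervals of generation $n$ inside $\widehat\Delta$ from this data. The endpoints $h_j$ used to define $\mathcal{F}_n$ that lie in $[a,b]$ are precisely the points $S_\sigma(0), S_\sigma(1)$ for the generation-$n$ basic intervals just described, intersected with $[a,b]$; listing them in increasing order and discarding the gaps that miss $K$ recovers the children $\Delta_1 < \dots < \Delta_m$ of $\widehat\Delta$ together with their normalized lengths. For each such child its neighbour set $V_n(\Delta)$ is read off as the tuple of $(a_i, L_i)$ coming from those generation-$n$ basic intervals whose interior meets the interior of that child—again all computed in normalized coordinates, so independent of the actual location $a$ and scale of $\widehat\Delta$ beyond what $\ell_{n-1}(\widehat\Delta)$ and $V_{n-1}(\widehat\Delta)$ encode. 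Finally, the integer $t_n(\Delta)$ is by definition the rank of $\Delta$ among the left-to-right ordered children of $\widehat\Delta$ sharing its normalized length and neighbour set, and since we have just produced the full ordered list of children with all their normalized lengths and neighbour sets, $t_n(\Delta)$ is determined as well.

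There is one subtlety I should address carefully: the condition "$(h_j, h_{j+1}) \cap K \neq \emptyset$" that decides which subintervals count as net intervals. I need to argue that whether a given normalized gap between consecutive endpoints meets $K$ is itself determined by the neighbour data. This follows because $K \cap \widehat\Delta = \bigcup S_\sigma(K)$ over the neighbours $\sigma$ of $\widehat\Delta$, and each $S_\sigma(K)$ is a similar copy of $K$ positioned and scaled by data already in $V_{n-1}(\widehat\Delta)$; so the subinterval $[h_j, h_{j+1}]$ meets $K$ iff it meets one of these copies, which is a condition on normalized positions and lengths only, using the fixed geometry of $K$. I expect this verification—that the "hits $K$" test, and more generally the passage from basic-interval data to net-interval data, genuinely uses nothing about $\widehat\Delta$ except $\ell_{n-1}(\widehat\Delta)$ and $V_{n-1}(\widehat\Delta)$—to be the main obstacle; the rest is bookkeeping with the normalization conventions. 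Once this is in place, the lemma follows, and I would also remark that this is exactly what makes the transition structure between characteristic vectors finite in the finite type setting, setting up the transition matrices used later.
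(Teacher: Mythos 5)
Your proposal is correct and takes essentially the same route as the paper's proof: you decompose each generation-$n$ word covering a child as a neighbour prefix $\sigma$ (recorded by a pair $(a_i,L_i)$ of the parent) followed by a short extension word whose possibilities form a finite set determined by $|L_i|$, and then compute the children's endpoints, normalized lengths and neighbour sets in coordinates normalized relative to the parent, so that the base point $a$ drops out. Your explicit handling of the condition $(h_j,h_{j+1})\cap K\neq\emptyset$ and of the sibling index $t_n(\Delta)$ supplies details the paper leaves implicit, and is consistent with its argument.
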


\begin{proof}
Let $\Delta =[a,b]=[a,a+r_{\min }^{n}\ell _{n}(\Delta )]\in \mathcal{F}_{n}$
and assume that $V_{n}(\Delta )=((a_{i},L_{i}))_{i=1}^j$. The endpoints
of the children of $\Delta $ are those points $S_{\omega }(0),$ $S_{\omega
}(1)\in \lbrack a,b]$ for some $\omega \in \Lambda _{n+1}$. For each such $%
\omega ,$ there must be some $\sigma \in \Lambda _{n}$ and finite word $\tau 
$ such that $S_{\sigma }[0,1]$ $\supseteq \Delta $ and $\omega =\sigma \tau $%
. Such a $\sigma $ must satisfy the relation 
\begin{equation}
S_{\sigma }(x)=r_{\min }^{n}(L_{i}x-a_{i})+a  \label{Ssigma}
\end{equation}%
for some neighbour $(a_{i},L_{i})$ of $\Delta $. We claim the choices of
such $\tau $ can only be made from a finite set, say $F,$ determined by the $%
L_{i}$. This follows because if $\omega \in \Lambda _{n+1}$, we must have $%
\left\vert r_{\sigma \tau }\right\vert =r_{\min }^{n}\left\vert
L_{i}\right\vert \left\vert r_{\tau }\right\vert \leq r_{\min }^{n+1}$ and $%
\left\vert r_{(\sigma \tau )^{-}}\right\vert =\left\vert r_{\sigma \tau
^{-}}\right\vert >r_{\min }^{n+1}$. Therefore $\left\vert r_{\tau
}\right\vert \leq r_{\min }/\left\vert L_{i} \right\vert <  \left\vert r_{\tau ^{-}}\right\vert$ and there are only finitely many such $\tau $.

Using equation \eqref{Ssigma}, it follows that 
\begin{equation}
S_{\omega }(x)=a+r_{\min }^{n}(L_{i}S_{\tau }(x)-a_{i})=a+r_{\min
}^{n}(L_{i}r_{\tau }x+L_{i}S_{\tau }(0)-a_{i})  \label{SO}
\end{equation}%
for the appropriate $\tau \in F$. Thus the endpoints of the children of $%
\Delta $ are those of the form $a+r_{\min }^{n}c_{j}\in \lbrack a,a+r_{\min
}^{n}\ell _{n}(\Delta )]$ where $c_{j}=L_{i}S_{\tau }(0)-a_{i}$ or $%
c_{j}=L_{i}r_{\tau }+L_{i}S_{\tau }(0)-a_{i}$ for some $\tau \in F$. But
that is equivalent to saying $c_{j}\in \lbrack 0,\ell _{n}(\Delta )]$ for
some $c_{j}$ as specified, and hence the choice depends only upon $\ell
_{n}(\Delta ),$ $a_{i},L_{i}$ and the finite set $F,$ which depends only on
the $L_{i}$. Furthermore, the normalized lengths of these net subintervals
clearly also depend only on the $a_{i}$, $L_{i}$ and $\ell _n (\Delta )$.

Fix some child $[a+c_{j}r_{\min }^{n+1},a+c_{l}r_{\min }^{n+1}]$ of $\Delta $%
. Similar reasoning to the above shows that if this interval is covered by
some $S_{\omega }[0,1]$ for $\omega \in \Lambda _{n+1},$ then $S_{\omega }$
is as in equation \eqref{SO} for some $\tau \in F$ and the choice of $\tau $ is
independent of $a$. The normalized length of $S_{\omega }[0,1]$ and the
terms $r_{\min }^{-(n+1)}(S_{\omega }(0)-(a+c_{j}r_{\min }^{n+1}))$, and
thus the neighbour set of the child, also depend only on the $a_{i}$ and $L_{i}$.

This completes the proof that the characteristic vectors of the children
depend only upon the normalized length $\ell _{n}$ and neighbour set $%
((a_{i},L_{i}))$ of $\Delta $.
\end{proof}

\begin{theorem}
An IFS of finite type has only finitely many (distinct) characteristic
vectors.
\end{theorem}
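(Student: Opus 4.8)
The plan is to show that, as $\Delta$ ranges over all net intervals of all generations, each of the three entries of $\mathcal{C}_n(\Delta)=(\ell_n(\Delta),V_n(\Delta),t_n(\Delta))$ takes only finitely many values, and that the number $j$ of neighbours recorded in $V_n(\Delta)$ is uniformly bounded. Two facts are used throughout. First, finite type gives finitely many neighbourhood types, hence a uniform bound $M_0$ on $|\mathcal{N}(\sigma)|$ and a finite list of representative words, one per type. Second, every $\Delta=[a,b]\in\mathcal{F}_n$ is contained in some basic interval $S_\rho[0,1]$ with $\rho\in\Lambda_n$, and since the open interval $(a,b)$ contains no point of the form $S_\omega(0),S_\omega(1)$ with $\omega\in\Lambda_n$, \emph{every} $\sigma\in\Lambda_n$ whose basic interval meets $(a,b)$ in fact has $S_\sigma[0,1]\supseteq\Delta$; in particular each endpoint of $\Delta$ is of the form $S_\omega(0)$ or $S_\omega(1)$ for some $\omega\in\mathcal{N}(\rho)$ (taking $\omega=\rho$ when the endpoint is also an endpoint of $S_\rho$). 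Note that Lemma~\ref{lem:children} by itself does not suffice, since it only says each reduced characteristic vector has finitely many child-vectors; an a priori finiteness statement is still needed.

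To bound $j$: if $S_\sigma[0,1]$ and $S_\tau[0,1]$, with $\sigma,\tau\in\Lambda_n$, both contain $\Delta$, then $S_\sigma(0,1)\cap S_\tau(0,1)\supseteq(a,b)\neq\emptyset$, so $\sigma$ and $\tau$ are neighbours; hence the words of $\Lambda_n$ covering $\Delta$ are pairwise neighbours and lie in $\mathcal{N}(\rho_0)$ for any one of them $\rho_0$, so $j\le M_0$. Now fix $\Delta$ and a covering word $\rho\in\Lambda_n$, say of neighbourhood type $T$ with representative $\rho^{(T)}\in\Lambda_{m_T}$, and let $f(x)=\pm r_{\min}^{\,m_T-n}x+c$ be an associated rescaling map, so that $f\circ S_\rho=S_{\rho^{(T)}}$ and $\{f\circ S_\eta:\eta\in\mathcal{N}(\rho)\}=\{S_\nu:\nu\in\mathcal{N}(\rho^{(T)})\}$. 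Then $f$ carries the finite set $\{S_\eta(0),S_\eta(1):\eta\in\mathcal{N}(\rho)\}$ bijectively onto the fixed finite set $E_T:=\{S_\nu(0),S_\nu(1):\nu\in\mathcal{N}(\rho^{(T)})\}$; in particular $f(a),f(b)\in E_T$, and since $(a,b)$ contains no point of the first set, $f(a)$ and $f(b)$ are \emph{consecutive} in $E_T$.

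The rest is bookkeeping with the linear part $f'=\pm r_{\min}^{\,m_T-n}$. Since $f$ is affine with $f\circ S_\rho=S_{\rho^{(T)}}$, we get $b-a=(f(b)-f(a))/f'$, and for a neighbour $(a_i,L_i)$ arising from a covering word $\sigma_i$ with $f\circ S_{\sigma_i}=S_{\nu_i}$, $\nu_i\in\mathcal{N}(\rho^{(T)})$, we get $a-S_{\sigma_i}(0)=(f(a)-S_{\nu_i}(0))/f'$ and $r_{\sigma_i}=r_{\nu_i}/f'$. Multiplying by $r_{\min}^{-n}$ and using $r_{\min}^{-n}/f'=\pm r_{\min}^{-m_T}$ gives
\[ \ell_n(\Delta)=\pm r_{\min}^{-m_T}\bigl(f(b)-f(a)\bigr),\qquad a_i=\pm r_{\min}^{-m_T}\bigl(f(a)-S_{\nu_i}(0)\bigr),\qquad L_i=\pm r_{\min}^{-m_T}r_{\nu_i}, \]
each of which lies in a finite set determined by the finitely many types $T$ and their finite configurations $E_T$. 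In particular $\ell_n(\Delta)$, being one of finitely many positive numbers, satisfies $\ell_n(\Delta)\ge c$ for some $c>0$; since also $\ell_m(\cdot)\le 1$ in every generation $m$ (a net interval lies in a basic interval of its generation, of length $\le r_{\min}^m$), a net interval of generation $n-1$ has at most $r_{\min}^{-1}/c$ children (disjoint net intervals of length $\ge c\,r_{\min}^n$ fitting inside one of length $\le r_{\min}^{\,n-1}$), so $t_n(\Delta)\le r_{\min}^{-1}/c$. Thus $\ell_n(\Delta)$, $V_n(\Delta)$ (at most $M_0$ pairs, each from a finite set), and $t_n(\Delta)$ all range over finite sets, and the theorem follows.

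The step I expect to be the main obstacle is the handling of the rescaling map: verifying that $f$ genuinely sends $\Delta$ onto a gap between consecutive points of the representative configuration, and pinning down the scaling factor exactly. This involves keeping track of orientations (the $r_\sigma$, and hence the $L_i$, may be negative), the boundary case in which an endpoint of $\Delta$ coincides with an endpoint of the covering basic interval $S_\rho$, and reconciling the exponent of $r_{\min}$ in $|f'|$ with the normalization conventions so that $r_{\min}^{-n}/f'$ is independent of $n$. As an alternative to part of this, one can use the commensurability of the contraction ratios (Remark~\ref{commensurate}) to see directly that the $|L_i|$ form a finite set, but the positions $a_i$ and the lower bound on $\ell_n(\Delta)$ still appear to require the rescaling-map argument above.
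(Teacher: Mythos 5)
Your argument is correct and follows essentially the same route as the paper's proof: cover $\Delta$ by a generation-$n$ basic interval, note that the endpoints of $\Delta$ and of every covering basic interval come from the endpoint set of that word's neighbours, and use the rescaling map attached to its neighbourhood type to place all normalized quantities in a finite set determined by the finitely many types. The differences are only matters of explicitness (the bound on the number of neighbours, the lower bound on $\ell_n$ yielding finiteness of $t_n(\Delta)$, and the orientation/exponent bookkeeping), which the paper leaves implicit.
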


\begin{proof}
Consider any net interval $\Delta \in \mathcal{F}_{n}$ and an $n\,$'th
generation basic interval, $S_{\sigma }[0,1],$ covering $\Delta $. Let $%
\mathcal{N(\sigma )}$ be the neighbours of $\sigma $. Suppose $\{e_{i}\}$ is
the (finite) set of all endpoints of $S_{\tau }[0,1]$ for $\tau \in \mathcal{%
N(\sigma )}$. The interval $\Delta $ must be of the form $[e_{s},e_{t}]$ for
a suitable choice (of adjacent endpoints). Moreover, any $n$'th generation
basic interval covering $\Delta $ arises from some $\tau \in \mathcal{%
N(\sigma )}$. It follows that the normalized length of $\Delta $, the
normalized length of $S_{\sigma }[0,1]$ and $r_{\min }^{-n}(e_{i}-$ $S_{\tau
}(0))$ for any $\tau \in \Lambda _{n}$ with $S_{\tau }[0,1]\supseteq \Delta $
are all in the finite set of normalized differences $\{r_{\min }^{-n}(e_{i}-$
$e_{j})\}$. This set of normalized differences is the same for all $\sigma
^{\prime }\in \Lambda _{n}$ which have the same neighbourhood type as $%
\sigma $. As there are only finitely many neighbourhood types, there can
only be finitely many normalized differences. But the elements of a
characteristic vector are chosen from these, so there are only finitely many
characteristic vectors.
\end{proof}

We will denote the set of characteristic vectors by 
\begin{equation*}
\Omega =\{\mathcal{C}_{n}(\Delta ):n\in \mathbb{N}\text{, }\Delta \in 
\mathcal{F}_{n}\}\text{.}
\end{equation*}

\section{Local Dimensions and Transition Matrices}
\label{sec:local}

\subsection{Local dimensions of measures of finite type}

\begin{definition}
Given a probability measure $\mu $, by the \textbf{upper local dimension} of 
$\mu $\textit{\ }at $x\in $ supp$\mu $, we mean the number 
\begin{equation*}
\overline{\dim}_{loc}\mu (x)=\limsup_{r\rightarrow 0^{+}}\frac{\log \mu
([x-r,x+r])}{\log r}.
\end{equation*}%
Replacing the $\limsup $ by $\liminf $ gives the \textbf{lower local
dimension}, denoted $\underline{\dim}_{loc}\mu (x)$. If the limit exists, we
call the number the \textbf{local dimension} of $\mu $ at $x$ and denote
this by $\dim_{loc}\mu (x)$.
\end{definition}

It is easy to see that when the limit exists 
\begin{equation}
\dim _{loc}\mu (x)=\lim_{n\rightarrow \infty }\frac{\log \mu
([x-r^{n},x+r^{n}])}{n\log r}\text{ for }x\in \text{supp}\mu ,
\label{locdim}
\end{equation}%
for any fixed $0<r<1$. Similar statements hold for upper and lower local
dimensions. As all net intervals of generation $n$ have lengths comparable to 
$r_{\min }^{n}$, it follows that for a measure $\mu $ of finite type%
\begin{equation}
\dim _{loc}\mu (x)=\lim_{n\rightarrow \infty }\frac{\log (\mu (\Delta
_{n}(x))+\mu (\Delta _{n}^{+}(x))+\mu (\Delta _{n}^{-}(x)))}{n\log r_{\min }},
\label{locdimft}
\end{equation}%
where $\Delta _{n}(x)$ is the net interval of generation $n$ containing $x$
and $\Delta _{n}^{+}(x),\Delta _{n}^{-}(x)$ are the adjacent, $n$'th
generation net intervals on each side (with the understanding that in the
case that $x$ is a boundary point, we can replace these three net intervals
by the two net intervals having $x$ as a common boundary point.)

Thus we are interested in calculating $\mu (\Delta )$ for net intervals $%
\Delta $. We begin with some technical results.

\begin{lemma}
Suppose $\Delta =[a,b]\in \mathcal{F}_{n}$ has normalized length $\ell
_{n}(\Delta )$ and neighbour set $V_{n}(\Delta )=((a_{i},L_{i}))_{i=1}^{j}$.
For each $i=1,...,j$, let $F_{i}(x)=L_{i}^{-1}(\ell _{n}(\Delta )x+a_{i})$.
Then 
\begin{equation}
\mu (\Delta )=\sum_{i=1}^{j}\mu (F_{i}[0,1])\sum_{\substack{ \sigma \in
\Lambda _{n}  \\ S_{\sigma }(x)=r_{\min }^{n}(L_{i}x-a_{i})+a}}p_{\sigma }.
\label{firstcalc}
\end{equation}
\end{lemma}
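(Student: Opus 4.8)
The plan is to express $\mu(\Delta)$ by pulling back to the "cylinder" decomposition of $\mu$ via the self-similar identity $\mu = \sum_{j} p_j \,\mu\circ S_j^{-1}$, iterated down to generation $n$. First I would write, for any Borel set $E$,
\[
\mu(E) = \sum_{\sigma\in\Lambda_n} p_\sigma\, \mu\bigl(S_\sigma^{-1}(E)\bigr),
\]
which is the standard consequence of iterating the invariance rule down to generation $n$ (this requires only that $K=\bigcup_{\sigma\in\Lambda_n}S_\sigma(K)$ and that distinct $S_\sigma(K)$ for $\sigma\in\Lambda_n$ overlap in a $\mu$-null set — the latter is where finite type, and more specifically the fact that boundary points of net intervals form a $\mu$-null set since $\mu$ is non-atomic, gets used; alternatively one sums over $\sigma\in\Lambda_n$ and notes each point of $K$ lies in the required number of pieces up to a null set).

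Next I would take $E=\Delta=[a,b]$ and analyze $S_\sigma^{-1}(\Delta)$ for each $\sigma\in\Lambda_n$. Only those $\sigma$ with $S_\sigma[0,1]\cap\Delta$ of positive length contribute, i.e. those $\sigma$ whose basic interval covers $\Delta$ (net intervals are, by construction, not subdivided by any generation-$n$ basic-interval endpoint, so $S_\sigma[0,1]$ either contains $\Delta$ or meets it in at most a point). For such $\sigma$, the neighbour-set description gives $S_\sigma(x)=r_{\min}^n(L_i x - a_i)+a$ for exactly one pair $(a_i,L_i)\in V_n(\Delta)$; solving $S_\sigma(x)\in[a,b]$ yields $x\in S_\sigma^{-1}(\Delta)=[F_i(0),F_i(1)]=F_i[0,1]$ with $F_i(x)=L_i^{-1}(\ell_n(\Delta)x+a_i)$ exactly as defined in the statement (a short computation, valid whether $L_i>0$ or $L_i<0$, since then $F_i$ reverses orientation but $F_i[0,1]$ is the same interval). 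Grouping the sum over $\sigma\in\Lambda_n$ according to which pair $(a_i,L_i)$ they realize then produces
\[
\mu(\Delta)=\sum_{\sigma\in\Lambda_n} p_\sigma\,\mu\bigl(S_\sigma^{-1}(\Delta)\bigr)
=\sum_{i=1}^{j}\mu\bigl(F_i[0,1]\bigr)\sum_{\substack{\sigma\in\Lambda_n\\ S_\sigma(x)=r_{\min}^n(L_i x-a_i)+a}}p_\sigma,
\]
which is exactly \eqref{firstcalc}.

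The main obstacle I expect is the bookkeeping at the endpoints: one must be careful that the contributions from $\sigma$ whose basic interval meets $\Delta$ only in a single point (a boundary point of $\Delta$) can be discarded, and that summing $\mu(S_\sigma^{-1}(\Delta))$ over all of $\Lambda_n$ (rather than over a disjoint subfamily) does not over- or under-count. Both are handled by the same observation — $\mu$ is non-atomic, so single points contribute $0$, and the family $\{S_\sigma(K):\sigma\in\Lambda_n\}$ covers $K$ with pairwise intersections contained in the (countable, hence $\mu$-null) set of net-interval endpoints, so the pull-back formula $\mu(E)=\sum_{\sigma\in\Lambda_n}p_\sigma\mu(S_\sigma^{-1}(E))$ holds with equality. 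A secondary, purely notational, point is to check the two sign cases for $L_i$ when identifying $S_\sigma^{-1}(\Delta)$ with $F_i[0,1]$; this is routine once one writes $S_\sigma^{-1}(y)=L_i^{-1}\bigl(r_{\min}^{-n}(y-a)+a_i\bigr)$ and substitutes $y=a,b$.
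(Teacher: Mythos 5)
Your overall route is the same as the paper's: iterate the self-similar identity to get $\mu (\Delta )=\sum_{\sigma \in \Lambda _{n}}p_{\sigma }\mu (S_{\sigma }^{-1}(\Delta ))$, discard the $\sigma $ whose basic interval meets $\Delta $ in at most a point (using non-atomicity), observe that the remaining $\sigma $ satisfy $S_{\sigma }(x)=r_{\min }^{n}(L_{i}x-a_{i})+a$ for a unique neighbour $(a_{i},L_{i})$, check $S_{\sigma }^{-1}(\Delta )=F_{i}[0,1]$, and group by $i$. Those later steps are correct and match the paper's sketch.

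However, the justification you give for the first identity is flawed. You assert that $\mu (E)=\sum_{\sigma \in \Lambda _{n}}p_{\sigma }\mu (S_{\sigma }^{-1}(E))$ "requires \dots that distinct $S_{\sigma }(K)$ for $\sigma \in \Lambda _{n}$ overlap in a $\mu $-null set." That claim is false precisely in the setting this paper cares about: finite type allows substantial overlap. For the golden-mean IFS of Section \ref{sec:examples}, $S_{0}[0,1]\cap S_{1}[0,1]=[1-r,r]$ has positive $\mu $-measure, so if the pull-back identity genuinely needed null overlaps, your proof would collapse for the paper's central examples. Fortunately no separation hypothesis is needed: the identity is an exact identity of measures obtained purely algebraically. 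Starting from $\mu =\sum_{j}p_{j}\,\mu \circ S_{j}^{-1}$ and repeatedly expanding any term $p_{\tau }\,\mu \circ S_{\tau }^{-1}$ with $\left\vert r_{\tau }\right\vert >r_{\min }^{n}$ via the same relation, the process terminates (words in $\Lambda _{n}$ have bounded length) and each word of $\Lambda _{n}$ appears exactly once, because $\Lambda _{n}$ is a finite cut set of the word tree: no element of $\Lambda _{n}$ is a prefix of another, and every sufficiently long word has a unique prefix in $\Lambda _{n}$. Your fallback remark ("each point of $K$ lies in the required number of pieces up to a null set") is likewise not the right mechanism and should be dropped. With the identity justified this way, the measure-theoretic input (non-atomicity) enters only where you use it correctly, namely to discard the $\sigma $ whose basic interval touches $\Delta $ only at an endpoint; the rest of your argument then coincides with the paper's proof.
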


\begin{proof}
This follows from similar arguments to \cite{F3}, which we will sketch here.
From the defining identity for a self-similar measure, we have that 
\begin{equation}
\mu (\Delta )=\sum_{\sigma \in \Lambda _{n}}p_{\sigma }\mu (S_{\sigma
}^{-1}(\Delta )).
\label{eq:define}
\end{equation}%
In fact, as $\mu $ is non-atomic, the sum can be taken over those $\sigma $
such that $S_{\sigma }^{-1}(\Delta )\bigcap (0,1)$ is non-empty.  In this
case, the definition of a net interval ensures $\Delta \subseteq S_{\sigma
}[0,1].$ Hence $S_{\sigma }(x)=r_{\min }^{n}(L_{i}x-a_{i})+a$ for some $i\in
\{1,...,j\}$. The maps $F_{i}$ were defined so that $F_{i}[0,1]=S_{\sigma
}^{-1}(\Delta )$, hence the conclusion of the lemma.
\end{proof}

\begin{notation}
Given $\Delta \in \mathcal{F}_{n},$ $n\geq 1,$ with neighbour set $%
V_{n}(\Delta )=((a_{i},L_{i}))_{i=1}^{j}$, let 
\begin{equation*}
P_{n}^{i}(\Delta )=\sum_{\substack{ \sigma \in \Lambda _{n}  \\ S_{\sigma
}(x)=r_{\min }^{n}(L_{i}x-a_{i})+a}}p_{\sigma }\text{ for }i=1,...,j
\end{equation*}%
and let 
\begin{eqnarray*}
Q_{n}(\Delta ) &=&(P_{n}^{1}(\Delta ),...,P_{n}^{j}(\Delta ))\text{, }%
Q_{0}[0,1]=(1) \\
\text{ and }P_{n}(\Delta ) &=&\sum_{i=1}^{j}P_{n}^{i}(\Delta )=\left\Vert
Q_{n}(\Delta )\right\Vert.
\end{eqnarray*}
where here $\Vert \cdot \Vert$ denotes the $1$-norm.
\end{notation}

\begin{corollary}
There is a constant $c>0$ such that for any $n$ and $\Delta \in \mathcal{F}%
_{n}$, we have 
\begin{equation}
cP_{n}(\Delta )\leq \mu (\Delta )\leq P_{n}(\Delta )\text{.}  \label{PTcomp}
\end{equation}
\end{corollary}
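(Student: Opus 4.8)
The plan is to read the corollary off from identity \eqref{firstcalc}, which gives $\mu(\Delta)=\sum_{i=1}^{j}\mu(F_{i}[0,1])\,P_{n}^{i}(\Delta)$ for $\Delta\in\mathcal{F}_{n}$, $n\ge 1$, with neighbour set $V_{n}(\Delta)=((a_{i},L_{i}))_{i=1}^{j}$ and $F_{i}(x)=L_{i}^{-1}(\ell_{n}(\Delta)x+a_{i})$ (the generation $0$ case being trivial, since $\mu([0,1])=1=P_{0}([0,1])$). The upper estimate is immediate: each $F_{i}[0,1]$ is a subinterval of $[0,1]$, so $\mu(F_{i}[0,1])\le\mu([0,1])=1$, and therefore $\mu(\Delta)\le\sum_{i=1}^{j}P_{n}^{i}(\Delta)=P_{n}(\Delta)$.

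For the lower estimate I would produce a single constant $c>0$ with $\mu(F_{i}[0,1])\ge c$ for every interval $F_{i}[0,1]$ that can occur. The point is that the affine map $F_{i}$, and hence the interval $F_{i}[0,1]$, depends only on $\ell_{n}(\Delta)$, $a_{i}$ and $L_{i}$, that is, only on the reduced characteristic vector of $\Delta$; since an IFS of finite type has only finitely many characteristic vectors, the collection $\mathcal{I}=\{F_{i}[0,1]:n\ge1,\ \Delta\in\mathcal{F}_{n},\ 1\le i\le j\}$ is finite. Granting that $\mu(I)>0$ for each $I\in\mathcal{I}$, put $c=\min\{\mu(I):I\in\mathcal{I}\}>0$; then \eqref{firstcalc} yields $\mu(\Delta)\ge c\sum_{i=1}^{j}P_{n}^{i}(\Delta)=cP_{n}(\Delta)$, as required. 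A weaker, $\Delta$-dependent estimate would not suffice here, since the weights $P_{n}^{i}(\Delta)/P_{n}(\Delta)$ may concentrate on a single index $i$, so a uniform lower bound on the individual numbers $\mu(F_{i}[0,1])$ is genuinely needed.

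The step I expect to be the main obstacle is exactly this positivity $\mu(F_{i}[0,1])>0$. Fix $\Delta=[a,b]\in\mathcal{F}_{n}$ and a neighbour $(a_{i},L_{i})$, realized by some $\sigma\in\Lambda_{n}$ with $\Delta\subseteq S_{\sigma}[0,1]$, so that $F_{i}[0,1]=S_{\sigma}^{-1}(\Delta)$ (the identification made in the proof of the lemma above). Since $\mu$ is non-atomic and its support is $K$, we have $\mu(F_{i}[0,1])>0$ if and only if the interior of $F_{i}[0,1]$ meets $K$, that is, if and only if $S_{\sigma}(K)\cap(a,b)\ne\emptyset$. The definition of a net interval gives $(a,b)\cap K\ne\emptyset$, so what must be shown is that the particular basic interval $S_{\sigma}[0,1]$ attached to this neighbour actually contributes points of $K$ to the open interval $(a,b)$. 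I would deduce this from the rigidity of the finite type structure --- the endpoints of the gaps of $K$ are images of $0$ or $1$ under the maps $S_{\eta}$, so a gap of $S_{\sigma}(K)$ that swallowed $(a,b)$ would, on tracing its endpoints back through $S_{\sigma}$ and comparing generations, be incompatible with $[a,b]$ being a single net interval of generation $n$. Alternatively, and more cleanly, one may take the neighbour set of $\Delta$ to consist only of those $\sigma\in\Lambda_{n}$ for which $S_{\sigma}(K)\cap(a,b)\ne\emptyset$; the $\sigma$ so discarded contribute nothing to $\mu(\Delta)$ in \eqref{eq:define}, nor, under this convention, to $P_{n}(\Delta)$, so the positivity is automatic and the finiteness argument of the previous paragraph closes the proof.
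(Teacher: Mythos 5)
Your proposal is essentially the paper's own argument: the upper bound comes from $\mu(F_{i}[0,1])\leq 1$ in \eqref{firstcalc}, and the lower bound from taking $c$ to be the minimum of the finitely many positive values $\mu(F_{i}[0,1])$, the finiteness being a consequence of finite type. The positivity $\mu(F_{i}[0,1])>0$ that you isolate as the main obstacle is precisely the point the paper asserts without further argument (implicitly reading the neighbour set so that each neighbouring basic interval contributes points of $K$ to the interior of $\Delta$), so your treatment of that step is, if anything, more careful than the published one.
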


\begin{proof}
Note that $\mu(S_{\sigma}^{-1}(\Delta)) > 0$ for all $\sigma$ appearing in the 
    summand of equation \eqref{eq:define}.
For each $\sigma$ in this summand, there is an $i$ such that 
    $S_\sigma^{-1}(\Delta) = F_i([0,1])$.
There are only finitely many $F_i$, as there are only finitely many characteristic vectors.
From this, the result follows.
\end{proof}

\subsection{Transition matrices}

We can calculate $P_{n}$ by means of transition matrices which take us from
generation $n-1$ to generation $n$.

\begin{definition}
Let $\Delta =[a,b]\in \mathcal{F}_{n}$ and let $\widehat{\Delta }=[c,d]\in
\mathcal{F}_{n-1}$ denote its parent net interval. Assume
\begin{eqnarray*}
V_{n}(\Delta ) &=&((a_{1},L_{1}),\dots ,(a_{I},L_{I}))\text{ and } \\
V_{n-1}(\widehat{\Delta }) &=&((c_{1},M_{1}),\dots ,(c_{J},M_{J})).
\end{eqnarray*}%
The \textbf{primitive transition matrix}, $T(%
\mathcal{C}_{n-1}(\widehat{\Delta }),\mathcal{C}_{n}(\Delta ))$, is the $%
I\times J$ matrix $(T_{ij})$ which encapuslates information about the
relationship between the $(c_{i},M_{i})\in V_{n-1}(\widehat{\Delta })$ and $%
(a_{j},L_{j})\in V_{n}(\Delta )$. To be precise, let $\sigma _{i}\in \Lambda
_{n-1}$ be such that $\rho _{\min }^{-n+1}(c-S_{\sigma }(0))=c_{i}$ and $%
\rho _{\min }^{-n+1}\rho _{\sigma }=M_{i}$. Let $\mathcal{T}_{i,j}$ be the
set all $\omega $ such that $\sigma \omega \in \Lambda _{n}$, $\rho _{\min
}^{-n}(a-S_{\sigma _{i}\omega }(0))=a_{j}$ and $\rho _{\min }^{-n}\rho
_{\sigma _{i}\omega }=L_{j}$. Notice that $\mathcal{T}_{i,j}$ depends only $%
S_{\sigma _{i}}$ (or equivalently on $c_{i}$ and $M_{i}$), and not on the
choice of $\sigma _{i}$. We define $T_{i,j}=\sum_{\omega \in \mathcal{T}%
_{i,j}}p_{\omega }$ where the empty sum is taken to be $0$.
\end{definition}

Every column of a primitive transition matrix has at least one non-zero
entry since the existence of $(c_{j},M_{j})$ means there is some $\tau \in
\Lambda _{n}$ with $S_{\tau }[0,1]\supseteq \Delta $ and $S_{\tau
}(x)=r_{\min }^{n}(M_{j}x-c_{j})+c$. Taking as $\sigma $ the initial segment
of $\tau $ of generation $n-1$, we see that $S_{\sigma }[0,1]$ is a basic
interval of generation $n-1$ containing $\Delta ,$ and therefore must also
contain $\widehat{\Delta }$. Consequently there is some $(a_{i},$ $L_{i})$
such that $S_{\sigma }(x)=r_{\min }^{n-1}(L_{i}x-a_{i})+a$ and that means $%
T_{ij}\neq 0$.

When the support of $\mu $ is the full interval $[0,1],$ then it also
follows that each row has a non-zero entry.

The same reasoning as in \cite{F3} shows that 
\begin{equation*}
Q_{n}(\Delta )=Q_{n-1}(\widehat{\Delta })T(\mathcal{C}_{n-1}(\widehat{\Delta 
}),\mathcal{C}_{n}(\Delta )).
\end{equation*}%
More generally, if $\Delta $ has symbolic representation $(\gamma
_{0},\gamma _{1},...,\gamma _{n})$, then 
\begin{equation*}
P_{n}(\Delta )=\left\Vert T(\gamma _{0},\gamma _{1})\cdot \cdot \cdot
T(\gamma _{n-1},\gamma _{n})\right\Vert
\end{equation*}%
where the matrix norm is given by $\left\Vert (M_{ij})\right\Vert
=\sum_{ij}\left\vert M_{ij}\right\vert$.  We will write $T(\gamma
_{0},\gamma _{1},...,\gamma _{n})$ for $T(\gamma _{0},\gamma _{1})\cdot
\cdot \cdot T(\gamma _{n-1},\gamma _{n})$ and call this a \textit{transition
matrix}.

We note that if $A$ and $B$ are transition matrices, then since $A$ has a non-zero
entry in each column an easy exercise shows $\left\Vert AB\right\Vert \geq
c\left\Vert B\right\Vert ,$ where $c>0$ depends only on $A$. Thus if $%
[x]=(\gamma _{0},\gamma _{1},...)$ and $\Delta _{n}$ is the net interval
containing $x$ with symbolic representation $(\gamma _{0},\gamma
_{1},...,\gamma _{n})$, then for any $J \le n$.
\begin{equation*}
\mu (\Delta _{n})\sim P_{n}(\Delta )\sim \left\Vert T(\gamma _{J},\gamma
_{J+1},...,\gamma _{n})\right\Vert , 
\end{equation*}%
where the constants of comparability depend only on $\gamma _{0},...,\gamma
_{J-1}$. Here when we write $A_n \sim B_n$ we mean there are positive constants $c_1, c_2$ such that $$
A_n \le c_1 B_n \le c_2 A_n.$$

\subsection{Local dimensions at periodic points\label{LocPeriodic}} \label{PerPt}

A \textit{periodic point} $x$ is a point with symbolic representation%
\begin{equation*}
\lbrack x]=(\gamma _{1},...,\gamma _{J},\theta ^{-},\theta ^{-},...),
\end{equation*}%
where $\theta =(\theta _{1},...,\theta _{s},\theta _{1})$ is a cycle
(meaning, the first and last letters are the same) and $\theta ^{-}$ has the
last letter of $\theta $ deleted. We call $\theta $ a \textit{period} of $x$%
. Of course, periods are not unique; $(\theta ^{-},\theta )$ and $(\theta
_{2},...,\theta _{s},\theta _{1},\theta _{2})$ are other periods, for
instance.

An example of a periodic point is a boundary point. There are only finitely many periods of minimal length associated with boundary points.  We will denote by $\lambda $ the least
common multiple of these minimum period lengths. For any boundary point, there is always a choice of
period whose length is $\lambda $ and we will normally assume such a choice
has been made.

Let $x$ be a boundary point, and consider a symbolic representation of $x$.
We see that sufficiently long initial segments of this symbolic 
representation will represent a net interval with $x$ as one of its endpoints.
Typically, boundary points also have a
second representation where the corresponding net intervals have $x$ as
their opposite endpoint. We can write these two symbolic representations as 
\begin{equation*}
(\gamma _{1},...,\gamma _{J},\psi ,\theta ^{-},\theta ^{-},...)\text{ and }%
(\gamma _{1},...,\gamma _{J},\psi ^{\prime },\theta ^{\prime -},\theta
^{\prime -},...)
\end{equation*}
where $(\gamma _{1},...,\gamma _{J})$ is the symbolic representation of the
highest generation net interval containing $x$ as an interior point. We
refer to $\psi ,\psi ^{\prime }$ as preperiods. Again, these are not unique,
but there is a unique choice of minimal length and this minimal length is
bounded over all preperiods associated with the IFS. We will let this bound
be denoted by $\kappa $ and observe that by extending $\psi ,$ if necessary
(by adding in some letters from $\theta$ and cycling $\theta$ as necessary),
we can assume the preperiod of boundary points has length $\kappa $.

With the assumptions that the preperiod length is $\kappa $ and the period
length is $\lambda ,$ this gives two uniquely defined symbolic
representations for a typical boundary point. We will refer to these as the 
\textit{standard symbolic representations}.

Here is the formula for the local dimensions at periodic points. Note that $%
T(\theta )$ is a square matrix as $\theta $ is a cycle. We denote by $%
sp(T(\theta ))$ its spectral radius.

\begin{proposition}
\label{periodic}If $x$ is a periodic point with period $\theta $ of period
length $\beta $, then the local dimension of $\mu $ at $x$ exists and is
given by 
\begin{equation*}
\dim _{loc}\mu (x)=\frac{\log sp(T(\theta ))}{\beta \log r_{\min }},
\end{equation*}%
where if $x$ is a boundary point of a net interval with two different
standard symbolic representations given by periods $\theta $ and $\theta
^{\prime }$ of length $\beta = \lambda ,$ then $\theta $ is chosen to satisfy $%
sp(T(\theta ))\geq sp(T(\theta ^{\prime }))$.
\end{proposition}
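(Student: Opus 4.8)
The plan is to compute $\mu(\Delta_n(x))$ (together with its two neighbours) up to uniform multiplicative constants using the transition-matrix machinery, and to extract the growth rate. Write $[x]=(\gamma_1,\dots,\gamma_J,\theta^-,\theta^-,\dots)$ with $\theta=(\theta_1,\dots,\theta_s,\theta_1)$, so that $T(\theta)=T(\theta_1,\dots,\theta_s,\theta_1)$ is square. For $n=J+m\beta+q$ with $0\le q<\beta$, the net interval $\Delta_n(x)$ has symbolic representation $(\gamma_1,\dots,\gamma_J)$ followed by $m$ copies of $\theta^-$ and then an initial segment of $\theta^-$ of length $q$. Using the remark just before Section \ref{LocPeriodic}, $\mu(\Delta_n(x))\sim P_n(\Delta_n(x))=\|T(\gamma_1,\dots,\gamma_J)\,T(\theta^-)^m\,R_q\|$ where $R_q$ is the (bounded) product of the first $q$ primitive transition matrices of $\theta^-$, and the constants of comparability are absolute since there are only finitely many characteristic vectors and finitely many such tail words. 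The observation that $\|AB\|\ge c\|B\|$ whenever $A$ has a nonzero entry in every column lets me absorb the fixed prefix $T(\gamma_1,\dots,\gamma_J)$ and the bounded suffix $R_q$, so that $\mu(\Delta_n(x))\sim \|v\,T(\theta^-)^m\|$ for a fixed nonnegative row vector $v=Q_J(\Delta_J)$ with strictly positive entries (every $P_J^i>0$).

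Next I would relate $T(\theta^-)$ to $T(\theta)$. Since $\theta$ is the cycle $(\theta_1,\dots,\theta_s,\theta_1)$, we have $T(\theta)=T(\theta^-)\,T(\theta_s,\theta_1)$; both are indexed by the neighbour set of the characteristic vector $\theta_1$, hence square. From $T(\theta^-)^{m+1}=T(\theta^-)\big(T(\theta^-)\big)^{m}$ and a telescoping/commuting-under-norm argument, $\|v\,T(\theta^-)^m\|$ and $\|v\,T(\theta)^m\|$ have the same exponential growth rate: indeed $T(\theta)^m$ and $T(\theta^-)^m$ differ by fixed bounded factors on the two ends, so $\|v\,T(\theta)^m\|\sim\|vT(\theta^-)^m\,T(\theta_s,\theta_1)\|$, and again the column-nonzero property of the bounded factor gives comparability both ways. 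Therefore
\begin{equation*}
\dim_{loc}\mu(x)=\lim_{n\to\infty}\frac{\log\mu(\Delta_n(x))}{n\log r_{\min}}=\lim_{m\to\infty}\frac{\log\|v\,T(\theta)^m\|}{m\beta\log r_{\min}},
\end{equation*}
provided the limit over $n$ exists, which it does because along each residue class $q$ the relevant quantity is $\|vT(\theta^-)^m R_q\|$ and these all share the growth rate of $\|vT(\theta)^m\|$, while $q$ ranges over a finite set.

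The core analytic step is then: for a nonnegative square matrix $M=T(\theta)$ and a nonnegative vector $v$ with all positive entries,
\begin{equation*}
\lim_{m\to\infty}\frac{1}{m}\log\|v\,M^m\|=\log sp(M).
\end{equation*}
The upper bound $\limsup\le\log sp(M)$ is Gelfand's formula, $\|M^m\|^{1/m}\to sp(M)$. For the lower bound I use that $M$ has a nonzero entry in every column (it is a product of primitive transition matrices, each with that property) together with $v>0$: this forces $\|vM^m\|$ not to decay faster than $sp(M)^m$. Concretely, pass to the Perron eigenvalue on an irreducible diagonal block realizing $sp(M)$ in the Frobenius normal form; the column-nonzero hypothesis ensures mass cannot be lost from that block when multiplying, so $\|vM^m\|\gtrsim sp(M)^m$. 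Substituting $\beta\log r_{\min}$ in the denominator (noting $\log r_{\min}<0$, so larger $sp$ gives smaller dimension, consistent with the stated choice of $\theta$ over $\theta'$ for boundary points) finishes the proof.

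I expect the main obstacle to be the matrix-analytic lower bound $\liminf_m \frac1m\log\|vM^m\|\ge\log sp(M)$: one must rule out cancellation and loss of mass, and the clean way is to invoke the structural fact (already used implicitly in the excerpt) that transition matrices have a nonzero entry in each column, combined with positivity of $v$, to reduce to the Perron--Frobenius behaviour of the dominant irreducible block. A secondary, more bookkeeping-heavy point is justifying that the limit over all $n$ (not just $n\equiv J\pmod\beta$) exists and equals the stated value; this is handled by the finite-suffix argument above but needs to be stated carefully, and for boundary points one additionally notes that the two standard symbolic representations give periods $\theta,\theta'$ and that $\mu([x-r,x+r])$ for small $r$ is comparable to the sum of the $\mu$-measures of the net intervals abutting $x$ on both sides, whence the larger of $sp(T(\theta)),sp(T(\theta'))$ governs the (single, two-sided) local dimension.
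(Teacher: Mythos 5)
Your overall strategy is the same as the paper's: reduce $\mu (\Delta _{n}(x))$ to the norm of a transition-matrix product, absorb the fixed prefix using the fact that transition matrices have a nonzero entry in every column, identify the exponential growth rate with $sp(T(\theta ))$ via $\Vert T(\theta )^{m}\Vert ^{1/m}\rightarrow sp(T(\theta ))$, and at boundary points take the minimum of the two candidate dimensions (equivalently the larger spectral radius). However, there is a genuine error in your middle step. The transition matrix along the path consisting of $m$ consecutive copies of $\theta ^{-}$ is not the matrix power $T(\theta ^{-})^{m}$: writing $\theta ^{-}=(\theta _{1},\dots ,\theta _{s})$, the concatenation of two blocks contributes the connecting primitive matrix $T(\theta _{s},\theta _{1})$, so the product along the path is $T(\theta ^{-})\bigl(T(\theta _{s},\theta _{1})T(\theta ^{-})\bigr)^{m-1}=T(\theta )^{m-1}T(\theta ^{-})$. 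Indeed $T(\theta ^{-})$ maps the neighbour set of a net interval of type $\theta _{1}$ to that of type $\theta _{s}$ and need not even be square, so ``$T(\theta ^{-})^{m}$'' is in general undefined; and your claim that $T(\theta )^{m}$ and $T(\theta ^{-})^{m}$ differ only ``by fixed bounded factors on the two ends'' via a telescoping/commuting argument fails, because the $m-1$ interleaved factors $T(\theta _{s},\theta _{1})$ cannot be pulled out to the ends for non-commuting matrices. The repair is simply to do the bookkeeping correctly as above --- which is exactly the paper's route, where $\mu (\Delta _{n}(x))\sim \Vert T(\theta )^{m}\Vert $ for $n=J+\kappa +m\beta $ --- after which the problematic comparison is never needed.

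Two smaller points. Your ``core analytic step'' is easier than you fear: with $v>0$ and $M\geq 0$ one has $\Vert vM^{m}\Vert \geq (\min_{i}v_{i})\,\Vert M^{m}\Vert $, so Gelfand's formula alone gives both bounds; no Frobenius normal form or Perron block analysis is required (the paper avoids the vector $v$ altogether, working with $\Vert T(\theta )^{m}\Vert $ and absorbing the prefix with $\Vert AB\Vert \geq c\Vert B\Vert $). Also, for a periodic point that is not a boundary point you must still control the contribution of $\Delta _{n}^{\pm }(x)$ in \eqref{locdimft} --- for instance by showing that along the block-aligned subsequence $x$ lies at normalized distance bounded away from the endpoints of $\Delta _{n}(x)$, so a ball of radius comparable to $r_{\min }^{n}$ is contained in $\Delta _{n}(x)$. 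Your proposal announces it will handle the two neighbours but does so only in the boundary case; this is precisely the part the paper flags as ``slightly more technical''.
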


\begin{proof}
The proof is the same as that found in \cite[Proposition 2.7]{HHN} for
equicontractive, finite type self-similar measures. The key idea is that if $[x]=$ $(\gamma
_{1},...,\gamma _{J},\psi ,\theta ^{-},\theta ^{-},...)$ and $\Delta
_{n}(x)=(\gamma _{1},...,\gamma _{J},\psi ,\underbrace{\theta
^{-},...,\theta }_{m})$ is the associated $n$'th generation net interval for 
$n=J+\kappa +m\beta $, then $\mu (\Delta _{n}(x))\sim \left\Vert \left(
T(\theta )\right) ^{m}\right\Vert $. Thus%
\begin{eqnarray*}
\lim_{n\rightarrow \infty }\frac{\log \mu (\Delta _{n}(x))}{n\log r_{\min }}
&=&\lim_{n\rightarrow \infty }\frac{\log \left\Vert T(\theta
)^{m}\right\Vert }{n\log r_{\min }}=\lim_{m\rightarrow \infty }\frac{\log
\left\Vert T(\theta )^{m}\right\Vert ^{1/m}}{\frac{n}{m}\log r_{\min }} \\
&=&\frac{\log sp(T(\theta ))}{\beta \log r_{\min }}.
\end{eqnarray*}

If $x$ is a boundary point with the second standard symbolic representation $(\gamma
_{1},...,\gamma _{J},\psi ^{\prime },\theta ^{\prime -},\theta ^{\prime
-},...),$ then $\Delta _{n}^{\prime }(x)=(\gamma _{1},...,\gamma _{J},\psi
^{\prime },\underbrace{\theta ^{\prime -},...,\theta ^{\prime }}_{m})$ is
the net interval adjacent to $\Delta _{n}(x)$, with $x$ as the common boundary
point. We have%
\begin{equation*}
\lim_{n\rightarrow \infty }\frac{\log \mu (\Delta _{n}^{\prime }(x))}{n\log
r_{\min }}=\frac{\log sp(T(\theta ^{\prime }))}{\lambda \log r_{\min }}.
\end{equation*}%
Using these facts, we can readily deduce that 
\begin{eqnarray*}
\dim _{loc}\mu (x) &=&\lim_{n\rightarrow \infty }\frac{\log (\mu (\Delta
_{n}(x))+\mu (\Delta _{n}^{\prime }(x)))}{n\log r_{\min }} \\
&=&\min \left( \frac{\log sp(T(\theta ))}{\lambda \log r_{\min }},\frac{\log
sp(T(\theta ^{\prime }))}{\lambda \log r_{\min }}\right) .
\end{eqnarray*}

When $x$ is a periodic point that is not a boundary point, the ideas are
slightly more technical, but similar.
\end{proof}

\section{Comparable Self-similar Measures} 
\label{sec:comparable}

\subsection{Definitions of comparable and generalized regular} \label{CompReg}

For points $x\in K$ that are not periodic it can be complicated to compute
the local dimension as one needs to consider the $\mu $-measure of not only
the net intervals containing $x$ of each generation, but also the two
adjacent net intervals. We can avoid this complication when the self-similar
measure is what we call comparable.

\begin{definition}
We will call a self-similar measure associated with a finite type IFS 
\textbf{comparable} if for each $q>1$, there is a constant $c=c(q)>0$ such
that for all positive integers $n$ and adjacent net intervals $\Delta
_{1},\Delta _{2}$ of generation $n$, 
\begin{equation}
\frac{1}{c}q^{-n}P_{n}(\Delta _{2})\leq P_{n}(\Delta _{1})\leq
cq^{n}P_{n}(\Delta _{2}).  \label{comp}
\end{equation}
\end{definition}

\begin{example}
If $\mu $ is a self-similar measure associated with an equicontractive IFS
where the probabilities satisfy $p_{0}=p_{k}=\min p_{i}$ (where $S_{0}(0)=0$
and $S_{k}(1)=1),$ then $\mu $ is comparable and even satisfies the stronger
inequality%
\begin{equation*}
\frac{1}{cn}P_{n}(\Delta _{2})\leq P_{n}(\Delta _{1})\leq cnP_{n}(\Delta
_{2}).
\end{equation*}%
See \cite[Corollary 2.12]{F2} or \cite[Lemma 3.5]{HHM}. These equicontractive
measures with $p_0 = p_k = \min p_i$ are referred to as \textbf{regular}.
\end{example}

It is easy to give an elegant formula for the local dimension of a
comparable self-similar measure.

\begin{theorem}
\label{CompFormula}Assume the self-similar measure $\mu $ arises from an IFS
that is of finite type. If $\mu $ is comparable, then for any $x$ with
symbolic representation $(\gamma _{0},\gamma _{1},\gamma _{2},...),$%
\begin{align*}
\dim _{loc}\mu (x) & =\lim_{n}\frac{\log \mu(\Delta _{n}(x))}{n\log r_{\min }}\\
 & =\lim_{n}\frac{\log P_{n}(\Delta _{n}(x))}{n\log r_{\min }} \\
& =\lim_{n}\frac{\log \left\Vert T(\gamma _{0},\gamma _{1},\gamma
_{2},...\gamma _{n})\right\Vert }{n\log r_{\min }},  
\end{align*}
(should the limit exist).  Similar formulas hold for the lower and
upper local dimensions.
\end{theorem}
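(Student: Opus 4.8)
The plan is to start from the finite-type formula \eqref{locdimft}, which expresses $\dim_{loc}\mu(x)$ in terms of $\mu(\Delta_n(x))$, $\mu(\Delta_n^+(x))$ and $\mu(\Delta_n^-(x))$, and to use comparability to show that the two adjacent terms are dominated (up to subexponential factors) by the central term $\mu(\Delta_n(x))$. Concretely, by the Corollary giving \eqref{PTcomp} we have $\mu(\Delta)\sim P_n(\Delta)$ uniformly, so it suffices to work with $P_n$. Comparability says that for every $q>1$ there is $c(q)$ with $P_n(\Delta_n^{\pm}(x))\le c(q)q^n P_n(\Delta_n(x))$ (and the reverse), so
\[
P_n(\Delta_n(x))\le \mu(\Delta_n(x))+\mu(\Delta_n^+(x))+\mu(\Delta_n^-(x))\le \big(1+2c(q)q^n\big)P_n(\Delta_n(x)).
\]
Taking logs, dividing by $n\log r_{\min}$ (noting $\log r_{\min}<0$) and sending $n\to\infty$, the factor $\tfrac{1}{n}\log(1+2c(q)q^n)\to \log q$ contributes at most $\frac{\log q}{\log r_{\min}}$ to the difference between $\lim\frac{\log(\mu(\Delta_n)+\mu(\Delta_n^+)+\mu(\Delta_n^-))}{n\log r_{\min}}$ and $\lim\frac{\log\mu(\Delta_n(x))}{n\log r_{\min}}$; since $q>1$ is arbitrary this difference is $0$. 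This simultaneously shows the limit defining $\dim_{loc}\mu(x)$ via \eqref{locdimft} exists iff $\lim\frac{\log\mu(\Delta_n(x))}{n\log r_{\min}}$ does, and that they are equal, giving the first displayed equality.

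The second equality, $\lim_n\frac{\log\mu(\Delta_n(x))}{n\log r_{\min}}=\lim_n\frac{\log P_n(\Delta_n(x))}{n\log r_{\min}}$, is immediate from \eqref{PTcomp}: since $cP_n(\Delta)\le\mu(\Delta)\le P_n(\Delta)$ with $c$ a fixed positive constant independent of $n$ and $\Delta$, we have $\frac{1}{n}\log\mu(\Delta_n(x))-\frac{1}{n}\log P_n(\Delta_n(x))\to 0$, so dividing by $\log r_{\min}$ the two limits coincide (one existing iff the other does). The third equality, with $P_n(\Delta_n(x))$ replaced by $\|T(\gamma_0,\gamma_1,\dots,\gamma_n)\|$, is just the identity $P_n(\Delta_n(x))=\|T(\gamma_0,\dots,\gamma_n)\|$ recorded earlier in the subsection on transition matrices, so nothing further is needed there.

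For the lower and upper local dimensions the argument is the same verbatim with $\lim$ replaced by $\liminf$ or $\limsup$ throughout: the estimate $P_n(\Delta_n(x))\le \mu(\Delta_n(x))+\mu(\Delta_n^+(x))+\mu(\Delta_n^-(x))\le(1+2c(q)q^n)P_n(\Delta_n(x))$ holds for every $n$, so applying $\liminf_n$ (resp. $\limsup_n$) of $\frac{1}{n\log r_{\min}}\log(\cdot)$ to all three pieces and using subadditivity of $\limsup$ / superadditivity of $\liminf$ together with $\frac{1}{n}\log(1+2c(q)q^n)\to\log q$ and then $q\downarrow 1$ yields the claimed identities for $\underline{\dim}_{loc}$ and $\overline{\dim}_{loc}$.

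The only genuinely delicate point is bookkeeping at boundary points: when $x$ is the endpoint of some net interval, $\Delta_n(x)$ is not single-valued and formula \eqref{locdimft} is to be read with the two net intervals sharing $x$ as common boundary point in place of the three generic ones. In that case one takes $\mu(\Delta_n(x))$ in the statement to mean the larger of the two $\mu$-measures (equivalently, one works with both symbolic representations and the conclusion is about whichever gives the smaller value of $\frac{\log\mu}{n\log r_{\min}}$, exactly as in Proposition~\ref{periodic}); comparability again forces the two to differ only subexponentially, so the same chain of inequalities applies. I expect this special-case wrinkle — making sure the statement is interpreted consistently when $x$ lies on a net-interval boundary and that comparability is applied to the correct adjacent pair — to be the main thing requiring care; the core estimate is a one-line consequence of \eqref{comp} and \eqref{PTcomp}.
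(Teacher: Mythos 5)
Your argument is essentially the paper's own proof: sandwich $\mu(\Delta_n(x))+\mu(\Delta_n^{+}(x))+\mu(\Delta_n^{-}(x))$ between constant multiples of $P_n(\Delta_n(x))$ and $q^{n}P_n(\Delta_n(x))$ using \eqref{PTcomp} and \eqref{comp}, take $\frac{1}{n\log r_{\min}}\log(\cdot)$, and let $q\downarrow 1$, with the identity $P_n(\Delta_n(x))=\left\Vert T(\gamma_0,\dots,\gamma_n)\right\Vert$ giving the last equality. The only nitpick is that your displayed lower bound should read $c'P_n(\Delta_n(x))\leq \mu(\Delta_n(x))+\mu(\Delta_n^{+}(x))+\mu(\Delta_n^{-}(x))$ with the constant $c'$ from \eqref{PTcomp} (the inequality without $c'$ is not justified), but this constant is harmless after dividing by $n$, so the proof stands.
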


\begin{proof}
Given $x$, consider $\Delta _{n}(x)$ and the two adjacent net intervals $%
\Delta _{n}^{+}(x)$ and $\Delta _{n}^{-}(x)$ of generation $n$. As $\mu $ is
comparable, equation \eqref{comp} holds with $\Delta _{1}=\Delta _{n}(x)$ and $\Delta
_{2}$ either $\Delta _{n}^{+}(x)$ or $\Delta _{n}^{-}(x)$. We also know from
equation \eqref{PTcomp} that there exists $c^{\prime }>0$ such that $c^{\prime
}P_{n}(\Delta )\leq \mu (\Delta )\leq P_{n}(\Delta )$ for all net intervals $%
\Delta $ of generation $n$.

Putting these facts togther, we obtain%
\begin{equation*}
c^{\prime }P_{n}(\Delta _{n}(x))\leq \mu (\Delta _{n}(x))+\mu (\Delta
_{n}^{+}(x))+\mu (\Delta _{n}^{-}(x))\leq 3cq^{n}P_{n}(\Delta _{n}(x)).
\end{equation*}%
Thus%
\begin{eqnarray*}
\frac{\log 3c}{n\log r_{\min }}+\frac{n\log q}{n\log r_{\min }}+\frac{\log
P_{n}(\Delta _{n}(x))}{n\log r_{\min }} &\leq &\frac{\log (\mu (\Delta
_{n}(x))+\mu (\Delta _{n}^{+}(x))+\mu (\Delta _{n}^{-}(x)))}{n\log r_{\min }}
\\
&\leq &\frac{\log c^{\prime }}{n\log r_{\min }}+\frac{\log P_{n}(\Delta
_{n}(x))}{n\log r_{\min }}.
\end{eqnarray*}%
The conclusion of the theorem follows by letting $n\rightarrow \infty $ and
noting that $q>1$ is arbitrary.
\end{proof}

To define an analogue of the notion of `regular' (which we call `generalized regular') in the non-equicontractive
case, it is helpful to introduce some terminology. By a \textit{path of
generation }$n$ \textit{(associated with }$\sigma ),$ we mean a word $\omega
\in \Sigma ^{\ast }$ such that there is some $m$ and $\sigma \in \Lambda
_{m} $ such that $\sigma \omega \in \Lambda _{n+m}$. Of course, in the
equicontractive case, a path of generation $n$ is a word of length $n$.

By a \textit{left-edge path of generation }$n,$ we mean a path of generation 
$n$ associated with some $\sigma $ such that $S_{\sigma }[0,1]$ and $%
S_{\sigma \omega }[0,1]$ have the same left endpoint; equivalently, 
\begin{equation*}
\min \left( S_{\sigma }(0),S_{\sigma }(1)\right) =\min \left( S_{\sigma
\omega }(0),S_{\sigma \omega }(1)\right) .
\end{equation*}%
In this case, there will be a net interval $\Delta \in \mathcal{F}_{m}$,
    where $\Delta \subset S_\sigma[0,1]$, and further, $\Delta$ and $S_\sigma[0,1]$ share
    the same left endpoint.
Moreover, the descendent $\Delta ^{\prime
}\in \mathcal{F}_{m+n}$, with the same left endpoint will be contained in $%
S_{\sigma \omega }[0,1]$ and this latter interval will also share the same
left endpoint. If we want to emphasize the connection with $\Delta $, we
will say that $\omega $ is a \textit{left-edge path of }$\Delta $. We define
right-edge paths similarly and an edge path is either a right or left-edge
path.

\begin{example}
If $0=S_{0}(0)$, $0\notin S_{j}[0,1]$ for any $j=1,...,k$, and all $r_{i}>0$%
, then the only left-edge paths are of the form $(0)^{j}$ for some $j$.
\end{example}

\begin{notation}\label{regnotation}
Given a net interval $\Delta $ and $n\in \mathbb{N}$, we let%
\begin{equation*}
\Gamma _{\Delta ,n}=\sum_{\substack{ \omega \text{ edge path of }\Delta  \\ 
\text{of generation }n}}p_{\omega }.
\end{equation*}%
Let $R_{n}$ denote the minimum transition ratio between net intervals $n$
generations apart, meaning:%
\begin{equation*}
R_{n}=\inf_{\substack{ m,\Delta \in \mathcal{F}_{m},  \\ \Delta ^{\prime
}\subseteq \Delta ,\Delta ^{\prime }\in \mathcal{F}_{n+m}}}\frac{%
P_{m+n}(\Delta ^{\prime })}{P_{m}(\Delta )},
\end{equation*}%
i.e., the infimum is taken over all positive integers $m$ and net intervals $%
\Delta ^{\prime }\subseteq \Delta $ of generations $m+n$ and $m$
respectively. Finally, let%
\begin{equation*}
B(n)=\frac{1}{R_{n}}\sup_{\text{all }\Delta }\Gamma _{\Delta ,n}.
\end{equation*}
\end{notation}

We note that for any $\Delta \in \mathcal{F}_{m}$ and descendent $\Delta
^{\prime }\in \mathcal{F}_{m+n}$, we have 
\begin{equation*}
P_{m+n}(\Delta ^{\prime })\geq \inf \{p_{\omega }:\omega \text{ path of
generation }n\}P_{m}(\Delta ).
\end{equation*}%
Hence $R_{n}\geq \inf p_{\omega }$ where the infimum is taken over all paths
of generation $n$. Of course, that implies $R_{n}\geq \min_{i}p_{i}^{n} >0$.

\begin{definition}\label{regdefn}
\label{regdef}We will say that a self-similar measure associated with a
finite type IFS is \textbf{generalized regular} if for each $q>1$, $\lim_{n}B(n)/q^{n}=0$%
.
\end{definition}

\begin{remark} \label{RegSub}
Notice that for any fixed $i$, $\Gamma _{\Delta ,n}\sim \Gamma _{\Delta
,n+i} $ and $R_{n}\sim R_{n+i}$, with the constants of comparability
depending only on $i$. This is because there are only finitely many paths of
generation $d$. Thus in verifying generalized regularity it is enough to check that the
requirement of Definition \ref{regdef} holds along the subsequence $%
(B(dn))_{n}$, for example.
\end{remark}

Generalized regularity is a useful concept as it implies comparability, as we prove in
Theorem \ref{regimpliescomp}. But first, we give some conditions that ensure
generalized regularity.

\begin{example}
Assume the IFS is equicontractive with $S_{0}(0)=0,S_{k}(1)=1$ and $%
r_{i}=r>0 $ for all $i$. Then the edge paths of generation $n$ are the
length $n$ words $(0)^{n}$ and $(k)^{n},$ hence $\Gamma _{\Delta ,n}\leq
p_{0}^{n}+p_{k}^{n}$ for all $\Delta $. As $R_{n}\geq \min_{i}p_{i}^{n}$, it
follows that if $p_{0}=p_{k}=\min p_{i}$, then the IFS\ is generalized regular. Thus our
notion of generalized regular extends the notion of regular from the equicontractive
case.
\end{example}

\begin{remark}
Example \ref{ExReg} shows that this definition of generalized regular includes self-similar measures arising from equicontractive IFS that are not regular in the sense of the original definition. 
\end{remark}

\begin{proposition}
\label{reg}Assume the finite type IFS $\{S_{j}:j=0,...,k\}$ has the property
that $0\in S_{0}[0,1]$, $1\in S_{k}[0,1]$ and $0,1\notin S_{j}[0,1]$ for $%
j\neq 0,k$. Let $\mu $ be an associated self-similar measure with
probabilities $\{p_{j}\}$.
\end{proposition}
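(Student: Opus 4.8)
Because the Proposition as displayed records only the geometric constraints on the IFS, I read its intended conclusion as: the associated $\mu$ is generalized regular provided the probabilities satisfy $p_0^{q_0}=p_k^{q_k}=\min_{0\le j\le k}p_j^{q_j}$, where $q_j$ is the rational exponent of Remark~\ref{commensurate}, so that $|r_j|^{q_j}=r_{\min}$. Put $\pi_j=p_j^{q_j}$ and $\pi_{\min}=\min_j\pi_j$; the hypothesis then reads $\pi_0=\pi_k=\pi_{\min}$. By Definition~\ref{regdefn} together with Notation~\ref{regnotation}, it suffices to show that $B(n)$ is bounded above by a constant, since that forces $B(n)/q^n\to 0$ for every $q>1$.

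The core step is to identify the edge paths. Fix a net interval $\Delta$ and a neighbour $S_\sigma[0,1]\supseteq\Delta$ sharing the left endpoint of $\Delta$, and let $\omega$ be a left-edge path associated with $\sigma$, so that $\min(S_{\sigma\omega}(0),S_{\sigma\omega}(1))=\min(S_\sigma(0),S_\sigma(1))$. Cancelling $S_\sigma$ forces the left endpoint (when $r_\sigma>0$) or the right endpoint (when $r_\sigma<0$) of $S_\omega[0,1]$ to be $0$ or $1$. Since $0,1\in K$ and, by hypothesis, $0\in S_j[0,1]$ only for $j=0$ while $1\in S_j[0,1]$ only for $j=k$, an induction on the length of $\omega$ shows that $\omega$ is a word on the alphabet $\{0,k\}$ whose pattern is completely determined by the signs of $r_0$ and $r_k$: for instance $\omega=(0)^{\ell}$ when $r_0>0$, and $\omega=(0)(k)^{\ell-1}$ when $r_0<0<r_k$. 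Finite type bounds the number of neighbours of $\Delta$ (there are only finitely many characteristic vectors), and for each admissible pattern the requirement that $\omega$ be of generation $n$, i.e.\ $r_{\min}^{n+1}<|r_\omega|\le r_{\min}^{n-1}$, confines the length $\ell$ to $O(1)$ values. Hence $\Delta$ has only $O(1)$ edge paths of generation $n$, uniformly in $\Delta$ and $n$; right-edge paths are handled symmetrically.

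Now the commensurability bookkeeping. Writing $|r_j|=r_{\min}^{1/q_j}$, a path $\omega$ of generation $n$ satisfies $n-1<\sum_i q_{\omega_i}^{-1}<n+1$, and $p_\omega=\prod_i\pi_{\omega_i}^{1/q_{\omega_i}}$ (since $\pi_j^{1/q_j}=p_j$); because $\log\pi_{\omega_i}\ge\log\pi_{\min}<0$ this gives $\log p_\omega\ge(\log\pi_{\min})\sum_i q_{\omega_i}^{-1}>(n+1)\log\pi_{\min}$, so $p_\omega\ge\pi_{\min}^{n+1}$ for every path of generation $n$. Using $R_n\ge\inf\{p_\omega:\omega\text{ a path of generation }n\}$, noted just before Definition~\ref{regdefn}, we get $R_n\ge\pi_{\min}^{n+1}=\pi_0\,\pi_{\min}^{n}$. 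For an edge path every letter lies in $\{0,k\}$, where $\pi_0=\pi_k$, so $p_\omega=\pi_0^{\sum_i q_{\omega_i}^{-1}}\le\pi_0^{n-1}$, and combined with the $O(1)$ bound on the number of edge paths this gives a constant $C$ with $\Gamma_{\Delta,n}\le C\,\pi_0^{n}$ for all $\Delta$. Therefore
\[
B(n)=\frac{1}{R_n}\,\sup_{\Delta}\Gamma_{\Delta,n}\ \le\ \frac{C\,\pi_0^{n}}{\pi_0^{n+1}}\ =\ \frac{C}{\pi_0},
\]
which is independent of $n$, so $\mu$ is generalized regular.

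The main obstacle is the structural claim in the second paragraph: that an edge path of a fixed net interval is necessarily a word on $\{0,k\}$, and that only boundedly many such paths occur in each generation. Pinning this down cleanly requires a careful case analysis of the four sign patterns of $(r_0,r_k)$ as the shared endpoint is propagated through the composition $S_{\sigma\omega}=S_\sigma\circ S_\omega$, together with the finite-type fact that a net interval has boundedly many covering basic intervals, hence boundedly many neighbours. Once those are in hand, everything else is the elementary manipulation with the exponents $q_j$ carried out above, and the two halves of the hypothesis are used precisely to make the upper bound on $\Gamma_{\Delta,n}$ and the lower bound on $R_n$ both of order $\pi_0^{n}$.
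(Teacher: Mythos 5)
Your argument is correct for the statement you formulated, and at its core it is the same proof as the paper's: identify the edge paths as words on the alphabet $\{0,k\}$ (using that $0\in S_j[0,1]$ forces $j=0$ and $1\in S_j[0,1]$ forces $j=k$, with the pattern dictated by the signs of $r_0,r_k$), bound $\Gamma_{\Delta,n}$ above and $R_n$ below via $r_{\min}^{n+1}\le |r_{\omega}|\le r_{\min}^{n-1}$, and conclude that $B(n)$ stays bounded. Your bookkeeping with $\pi_j=p_j^{q_j}$ (equivalently, comparing the ratios $\log p_j/\log|r_j|$) is in fact a slightly cleaner route than the paper's: the paper proves $p_{\omega}\ge Cp_0^{N_n}$ only along a subsequence $n=dm$ chosen so that $r_{\min}^{d}=r_0^{b}$, using the exponent estimate $(n+1)/(n-1)$ and then invoking Remark \ref{RegSub}, whereas you obtain a uniform bound $B(n)\le C/\pi_0$ for all $n$ directly. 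The one substantive discrepancy is in the conclusion you reconstructed: the paper's Proposition \ref{reg} splits into three sign cases, and in the mixed case $r_0<0<r_k$ (item (2)) it assumes only $\log p_k/\log r_k\ge\max_{j}\log p_j/\log|r_j|$, with no condition tying $p_0$ to $p_k$; this works because there the edge paths are exactly $(k)^n$ and $(0,(k)^{n})$, so the letter $0$ occurs at most once and contributes only a bounded multiplicative factor. Your hypothesis $\pi_0=\pi_k=\pi_{\min}$ reproduces items (1) and (3) exactly but is strictly stronger than necessary in case (2) (and its mirror), so as stated your result does not recover that part of the proposition — although your own edge-path analysis already contains the observation needed to relax the hypothesis there.
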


\begin{enumerate}
\item If $r_{0},r_{k}>0$, then $\mu $ is generalized regular if 
\begin{equation*}
\frac{\log p_0}{\log r_0} = \frac{\log p_k}{\log r_k} \geq
\max_{j=1,...,k-1}\frac{\log p_{j}}{\log \left\vert r_{j}\right\vert }.
\end{equation*}

\item If $r_{0}<0$ and $r_{k}>0$, then $\mu $ is generalized regular if 
\begin{equation*}
\frac{\log p_{k}}{\log r_{k}}\geq \max_{j=0,...,k}\frac{\log p_{j}}{\log
\left\vert r_{j}\right\vert }.
\end{equation*}%
A similar statement holds if $r_{0}>0$ and $r_{k}<0$.

\item If $r_{0},r_{k}<0$, then $\mu $ is generalized regular if 
\begin{equation*}
\frac{\log p_0}{\log\left\vert r_0\right\vert} = \frac{\log p_k}{\log\left\vert r_k\right\vert} \geq
\max_{j=1,...,k-1}\frac{\log p_{j}}{\log \left\vert r_{j}\right\vert }.
\end{equation*}
\end{enumerate}

\begin{proof}
(1) In this case, $\Gamma_{\Delta, n} \le c(p_0^{N_n} + p_k^{M_n})$ for suitable choices of $N_{n},M_{n}$ and $c$ independent of $\Delta$. Thus
it is enough to prove that there is some constant $C$ such that for all $n$
and for every path $\omega = (\omega_1,...,\omega_\ell) $ of generation $n$, we have 
\begin{equation*}
Cp_{0}^{N_{n}},Cp_{k}^{M_{n}}\leq p_{\omega }.
\end{equation*}%
Indeed, as noted in Remark \ref{RegSub} it is enough to prove this for the integers $n$ that are multiplies
of a fixed constant $d,$ and that is what we will actually show.

Using the elementary fact that $\alpha _{i},\beta _{i}<0$ and $z\geq \min (%
\frac{\alpha _{1}}{\beta _{1}},\frac{\alpha _{2}}{\beta _{2}})$ implies $%
z\geq (\alpha _{1}+\alpha _{2})/(\beta _{1}+\beta _{2})$, we can deduce from
the hypothesis that%
\begin{equation*}
\frac{\log p_{0}}{\log r_{0}}\geq \frac{\Sigma _{i=1}^{\ell}\log p_{\omega _{i}}%
}{\Sigma _{i}\log \left\vert r_{\omega _{i}}\right\vert }=\frac{\log
p_{\omega }}{\log \left\vert r_{\omega }\right\vert }.
\end{equation*}%
Thus, for $\sigma =(0)^{N_{n}}$, a left-edge path of generation $n$, we have 
\begin{equation*}
\log p_{\sigma }\frac{\log \left\vert r_{\omega }\right\vert }{\log
r_{\sigma }}\leq \log p_{\omega }.
\end{equation*}%
An easy calculation shows that if $\omega $ is any path of generation $n$,
then 
\begin{equation*}
r_{\min }^{n+1}\leq \left\vert r_{\omega }\right\vert \leq r_{\min }^{n-1}.
\end{equation*}%
Thus $\log \left\vert r_{\omega }\right\vert /\log r_{\sigma }\leq
(n+1)/(n-1)$ and therefore 
\begin{equation*}
p_{\omega }\geq p_{\sigma }^{\frac{n+1}{n-1}}.
\end{equation*}

As the lengths of a finite type IFS are commensurate, (see Remark \ref{commensurate}) there must be integers 
$b,d$ such that $r_{\min }^{d}=r_{0}^{b}$. We will now check that $%
p_{0}^{N_{n}}\leq Cp_{\omega }$ for integers $n$ that are multiples of $d$,
say $n=dm$. For convenience, put $p=p_{0}^{b/d}$. As $r_{\min
}^{n}=r_{0}^{bm}$ and $\sigma =(0)^{N_{n}}$ is of generation $n$, it follows
that $N_{n}=bm$. Thus $p_{\sigma }=p_{0}^{N_{n}}=p_{0}^{bm}=p^{n}$. Hence
\begin{equation*}
p_{\omega }\geq p^{n\left( \frac{n+1}{n-1}\right) }\geq p^{4}p^{n}\equiv
Cp_{0}^{N_{n}}
\end{equation*}%
as we desired to show.

Similar arguments show $p_{\omega }\geq C^{\prime }p_{k}^{M_{n}}.$

(2) When $r_{0}<0$ and $r_{k}>0$, then the only edge paths are of the form $%
(k)^{n}$ and $(0,(k)^{n})$. Thus if $\sigma $ is an edge path, then 
$\log p_{\sigma }\sim \log p_{(k)^{n}}$ and $\log \left\vert r_{\sigma
}\right\vert \sim \log \left\vert r_{(k)^{n}}\right\vert ,$ so the arguments
are similar to the previous case.

(3) In this case, the edge paths are of the form $(k,(0,k)^{n},0)$ where the
first $k$ or last $0$ need not be present. Similar reasoning shows that the
measure is generalized regular provided%
\begin{equation*}
\frac{\log p_{0}p_{k}}{\log r_{0}r_{k}}\geq \max_{j=0,...,k}\frac{\log p_{j}%
}{\log \left\vert r_{j}\right\vert }\text{,}
\end{equation*}%
and this is equivalent to what is claimed in the statement of the
proposition.
\end{proof}

\subsection{Generalized regular implies comparable}

We continue to use the notation $B(n)$, $R_{n},\kappa $
and $\lambda $ introduced in Subsections \ref{PerPt} and \ref{CompReg}. 

\begin{theorem}
\label{regimpliescomp}If the self-similar measure $\mu $ is associated with
an IFS of finite type and is generalized regular, then $\mu $ is comparable.
\end{theorem}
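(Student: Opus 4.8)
We need to show that generalized regularity—i.e., $B(n)/q^n \to 0$ for every $q>1$, where $B(n) = R_n^{-1} \sup_\Delta \Gamma_{\Delta,n}$—implies the comparability inequality

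\begin{equation*}
\frac{1}{c(q)} q^{-n} P_n(\Delta_2) \le P_n(\Delta_1) \le c(q) q^n P_n(\Delta_2)
\end{equation*}

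for adjacent net intervals $\Delta_1,\Delta_2$ of generation $n$. By symmetry it suffices to prove the right-hand inequality, and by the already-established comparison $P_m(\Delta') \ge R_{n} P_m(\Delta)$ for descendants (so $P$ never shrinks faster than $R$), the content is to bound $P_n(\Delta_1)$ from above by $q^n P_n(\Delta_2)$ up to constants.

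**The main argument.** Let me think about this in terms of the commented-out lemma, which is clearly the intended engine. Fix adjacent $\Delta_1, \Delta_2$ of generation $n$ and let $\widehat\Delta$ be their most recent common ancestor, of some generation $n - \ell$; say $x$ is the shared boundary point between $\Delta_1$ and $\Delta_2$, which is then a boundary point of descendants of $\widehat\Delta$ at every generation from $n-\ell$ onward, with the two standard symbolic representations tracking the two sides. The first step is to reduce $\ell$ to a controlled size: since $x$ is a boundary point it is (eventually) periodic with preperiod $\kappa$ and period length $\lambda$, and for all large generations the two net intervals sharing $x$ have symbolic representations of the form $(\ldots, \psi, \underbrace{\theta^-,\ldots,\theta^-}_m)$ and $(\ldots, \psi', \underbrace{\theta'^-,\ldots,\theta'^-}_m)$. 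So after absorbing an $O(\kappa + \lambda)$ initial segment into the constants $c(q)$ (using the $A_n \sim B_n$ remark: dropping a bounded initial block of the transition-matrix product changes $\|T\|$ only by constants depending on that block), we are reduced to comparing $P_{n}(\Delta_m)$ and $P_n(\Delta_m')$ where both are built by iterating the \emph{same} periodic transition matrix $T(\theta)$ and $T(\theta')$ respectively, $m$ times, with $n = J + \kappa + m\lambda$.

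**The block structure.** Now invoke the block decomposition: ordering rows/columns of $T(\theta)$ so that the neighbours of $\Delta_m$ that survive in $\Delta_m'$ (i.e., the basic intervals straddling the boundary $x$) come first and those unique to $\Delta_m$ come last, one gets
\begin{equation*}
T(\theta) = \begin{bmatrix} E & F \\ 0 & D \end{bmatrix},
\end{equation*}
where the zero block records that a basic interval not meeting $\Delta_m'$ cannot have come from one that does (children of edge-paths stay edge-paths), and—crucially—every entry of $D$ is $p_\omega$ for an edge path $\omega$ of generation $\lambda$, so $\|D^k\| \le \Gamma_{\cdot, k\lambda} \le B(k\lambda) R_{k\lambda}$. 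Write $Q_n(\Delta_m)$ in coordinates compatible with this splitting: its "top" part $Q'$ is exactly $Q_n(\Delta_m')$ restricted to the shared neighbours (these basic intervals and their probabilities literally agree on the two sides), while the "bottom" part is $Q''$. Iterating, $Q''_{m} = Q''_{m-1} D + (\text{contribution from the } F \text{ block fed by } Q')$, so by induction $\|Q''_m\| \le \sum_{k=0}^{m-1} \|Q'_{m-1-k}\|\,\|F\|\,\|D^k\| + \|D\|^m\|Q''_0\|$, and since $\|Q'_j\| = P_n(\Delta'_{(j)}) \le P_n(\Delta'_m)/R_{\le}$-type bounds (the top part only grows as we go down, controlled by $R$), we get
\begin{equation*}
P_n(\Delta_m) = \|Q'_m\| + \|Q''_m\| \le P_n(\Delta'_m) + C\sum_{k=0}^{m} P_n(\Delta'_m)\, B(k\lambda) R_{k\lambda} R_{(m-k)\lambda}^{-1}.
\end{equation*}
Using $R_{k\lambda} \le 1$ and $R_{(m-k)\lambda}^{-1} \le R_{k\lambda}^{-1} R_{m\lambda}^{-1}$-style submultiplicativity (or directly bounding $R_{(m-k)\lambda} \ge R_{m\lambda}$ since fewer generations means a larger infimum — actually $R$ is decreasing, so one must be slightly careful and instead use $\sum_k B(k\lambda) R_{k\lambda}/R_{(m-k)\lambda} \le R_{m\lambda}^{-1}\sum_k B(k\lambda)$, crudely), the sum is bounded by $R_{m\lambda}^{-1}\sum_{k=0}^m B(k\lambda)$, and generalized regularity forces $B(k\lambda) \le \varepsilon^k$ eventually for any $\varepsilon>0$ while $R_{m\lambda}^{-1} \le (\min_i p_i)^{-m\lambda}$ grows only geometrically; choosing $\varepsilon$ small relative to $(\min_i p_i)^{-\lambda}$ and to $q^\lambda$ makes $\sum B(k\lambda) < \infty$, so the whole correction term is $\le C' R_{m\lambda}^{-1} P_n(\Delta_m')$. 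Finally $R_{m\lambda}^{-1} \le q^{m\lambda}$ for $n$ large (this is weaker than generalized regularity, but in fact one needs the stronger route: $R_{m\lambda}^{-1} B(\text{stuff})$ together must be $\le q^n$, which is exactly what $\lim B(n)/q^n = 0$ delivers). Absorbing the bounded prefix gives $P_n(\Delta_1) \le c(q) q^n P_n(\Delta_2)$, and symmetry finishes it.

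**The expected obstacle.** The delicate point is the bookkeeping that translates "$\Delta_1,\Delta_2$ adjacent of generation $n$" into "two net intervals sharing a boundary point that are $m\lambda$ generations below their common ancestor, up to a bounded prefix" — one must check that adjacency of net intervals genuinely propagates up to a common ancestor whose descendants keep $x$ on their boundary, and handle the non-equicontractive subtlety that generations are defined via $|r_\sigma| \le r_{\min}^n$ rather than word length (this is where $\kappa, \lambda$ and the "path of generation $n$" notion, rather than "word of length $n$", are essential). The second genuinely load-bearing estimate is getting the geometric-series bound to converge: one must be honest that $B(k\lambda) R_{k\lambda}^{-1}$-type products don't blow up, using that $B(n)/q^n \to 0$ for \emph{every} $q$ (so in particular $B(n) \le \min_i p_i^{-n} \cdot 2^{-n}$ eventually) together with $R_n \ge \min_i p_i^n$, and then the arbitrariness of $q$ at the end to get the clean statement. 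Everything else — the block structure, $\|D^k\| \le B(k\lambda)R_{k\lambda}$, and the prefix-absorption — is routine given the lemmas already in the paper.
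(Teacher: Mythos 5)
Your overall strategy is genuinely different from the paper's published proof and is, in outline, viable: you follow the block-triangular route (the period matrix $\left[\begin{smallmatrix}E & F\\ 0 & D\end{smallmatrix}\right]$ with $\Vert D^{k}\Vert \leq B(k\lambda )R_{k\lambda }$, then unroll $Q_{m}^{\prime \prime }=Q_{m-1}^{\prime }F+Q_{m-1}^{\prime \prime }D$), whereas the paper avoids any matrix bookkeeping and instead runs an induction on the generation $n$: it writes $P_{n}(\Delta _{1})=\sum_{\sigma \in \Lambda _{n-\lambda }}p_{\sigma }\sum_{\omega }p_{\omega }$, splits the $\sigma $ according to whether $S_{\sigma }[0,1]$ meets $\widehat{\Delta _{2}}$ only at the boundary point (each such $\sigma $ contributing at most $\Gamma _{\Delta _{1},\lambda }\leq B(\lambda )R_{\lambda }$) or not, gets $P_{n}(\Delta _{1})\leq B(\lambda )R_{\lambda }P_{n-\lambda }(\widehat{\Delta _{1}})+P_{n-\lambda }(\widehat{\Delta _{2}})$, and closes the induction after enlarging $\lambda $ so that $B(\lambda )<q^{\lambda }/2$ and taking $c\geq 2/R_{\lambda }$. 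The induction quietly disposes of two points your route must treat by hand: the common prefix $(\gamma _{1},\dots ,\gamma _{J})$ is \emph{not} of bounded length ($J$ is unbounded, so you cannot simply ``absorb an $O(\kappa +\lambda )$ initial segment into the constants''; what saves a direct argument is that the prefix is shared, so $P_{J+\kappa }(\Delta _{0})\leq P_{J}\leq P_{J+\kappa }(\Delta _{0}^{\prime })/R_{\kappa }$), and the term $\Vert Q_{0}^{\prime \prime }\Vert \,\Vert D^{m}\Vert $ must be compared to $P_{n}(\Delta _{m}^{\prime })$ by exactly this common-prefix bound, which your writeup does not do.

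More seriously, the central estimate in your proposal does not follow from the hypothesis. Generalized regularity says $B(n)/q^{n}\rightarrow 0$ for every $q>1$, i.e.\ $B(n)$ grows subexponentially; it does \emph{not} say $B(n)$ decays, so your claim that it ``forces $B(k\lambda )\leq \varepsilon ^{k}$ eventually'', hence $\sum_{k}B(k\lambda )<\infty $, is false: already in the regular equicontractive case one has $B(n)\asymp 1$ (e.g.\ $\Gamma _{\Delta ,n}\geq p_{0}^{n}$ for the leftmost $\Delta $ while $R_{n}\leq p_{0}^{n}$), so the series diverges linearly. Likewise ``$R_{m\lambda }^{-1}\leq q^{m\lambda }$'' fails, since $R_{n}^{-1}$ can grow like $(\min_{i}p_{i})^{-n}$, a fixed exponential rate beating every $q$ near $1$. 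Consequently your crude step $\sum_{k}B(k\lambda )R_{k\lambda }/R_{(m-k)\lambda }\leq R_{m\lambda }^{-1}\sum_{k}B(k\lambda )$ leaves an uncancelled exponentially large factor: in the regular case it gives a bound of order $m\,p_{\min }^{-m\lambda }$, far larger than $q^{n}$. The repair is to keep the $R$'s matched term by term: with the correct indexing the $k$-th summand is $\Vert Q_{m-1-k}^{\prime }\Vert \,\Vert F\Vert \,\Vert D^{k}\Vert \leq \bigl(P_{n}(\Delta _{m}^{\prime })/R_{(k+1)\lambda }\bigr)\Vert F\Vert \,B(k\lambda )R_{k\lambda }\leq (\Vert F\Vert /R_{\lambda })\,P_{n}(\Delta _{m}^{\prime })\,B(k\lambda )$, using $R_{(k+1)\lambda }\geq R_{\lambda }R_{k\lambda }$, and then $\sum_{k<m}B(k\lambda )\leq m\max_{k\leq m}B(k\lambda )=o(q^{n})$ for every $q>1$ suffices, with the $\Vert Q_{0}^{\prime \prime }\Vert \,\Vert D^{m}\Vert $ term handled via the common-prefix bound above. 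With those corrections (or by switching to the paper's induction) your approach goes through; as written, the step that is supposed to invoke generalized regularity rests on a misreading of it and would fail even in the simplest regular examples.
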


\begin{proof}
Fix $q>1$. As $B(n)/q^{n}\rightarrow 0$, we may replace $\lambda $ by a suitably
large multiple, so that $B(\lambda )<q^{\lambda }/2$. We will proceed
by induction on the generation of the net intervals. 
Let $N = \kappa + \lambda$ $(\kappa $ and $\lambda $ as defined in
Subsection \ref{LocPeriodic}).
Note, for $n \leq N$ there exists a constant $c$ such that the inequalities%
\begin{equation}
\frac{1}{c}q^{-n}P_{n}(\Delta _{2})\leq P_{n}(\Delta _{1})\leq
cq^{n}P_{n}(\Delta _{2})  \label{ProveReg}
\end{equation}%
hold for all adjacent net intervals $\Delta _{1},\Delta _{2}$ of level $n$.
This is because there are only finitely many such net
intervals and $P_{n}(\Delta )$ is always positive. There is no loss of
generality in assuming $c>2/R_{\lambda }$.

Now assume $\Delta _{1}$ and $\Delta _{2}$ are adjacent net intervals of
level $n>N$. If they have a common ancestor, $\widehat{\Delta },$ at level $%
n-k\,$, $k\leq N$, then equation \eqref{ProveReg} holds (with possibly a different
constant $c$, but depending only on the finitely many choices for $k)$ since 
\begin{equation*}
R_{k}P_{n-k}(\widehat{\Delta })\leq P_{n}(\Delta _{1}),P_{n}(\Delta
_{2})\leq P_{n-k}(\widehat{\Delta }).
\end{equation*}

So suppose otherwise. Assume the common boundary point $x$ of $\Delta _{1}$
and $\Delta _{2}$ has the two standard symbolic representations, 
\begin{equation*}
\lbrack x]=(\gamma _{1},...,\gamma _{J},\psi ,\theta ^{-},\theta ^{-},...)%
\text{, }(\gamma _{1},...,\gamma _{J},\psi ^{\prime },\theta ^{\prime
-},\theta ^{\prime -},...),
\end{equation*}%
with the notation as before. As $\Delta _{1}$ and $\Delta _{2}$ do not have
a common ancestor within $\kappa +\lambda $ levels back, it must be that $\Delta
_{1}$ has symbolic representation 
\begin{equation*}
(\gamma _{1},...,\gamma _{J},\psi ,\underbrace{\theta ^{-},\theta
^{-},...,\theta ^{-}}_{m},\theta _{1},...,\theta _{s})
\end{equation*}
for some $m\geq 1,$ where $\theta _{1},...,\theta _{s}$ is an initial
segment of $\theta ,$ possibly empty. A similar statement holds for $\Delta
_{2}$. In particular, $n=J+\kappa +m\lambda +s.$

Since $P_{n}(\Delta _{1})\sim P_{n+\lambda -s}(\Delta _{1}^{\ast })$ where $%
\Delta _{1}^{\ast }=(\gamma _{1},...,\gamma _{J},\psi ,\underbrace{\theta
^{-},\theta ^{-},...,\theta ^{-}}_{m+1})$, there is no loss of generality in
assuming%
\begin{equation*}
\Delta _{1}=(\gamma _{1},...,\gamma _{J},\psi ,\underbrace{\theta
^{-},\theta ^{-},...,\theta ^{-}}_{m})
\end{equation*}%
for some $m\geq 2$ and similarly for $\Delta _{2}$. Let $\widehat{\Delta _{1}%
}$ and $\widehat{\Delta _{2}}$ be the ancestors $\lambda $ levels earlier;
in other words, $\widehat{\Delta _{1}}=(\gamma _{1},...,\gamma _{J},\psi ,%
\underbrace{\theta ^{-},\theta ^{-},...,\theta ^{-}}_{m-1})$ and similarly
for $\widehat{\Delta _{2}}$.

Now, 
\begin{equation*}
P_{n}(\Delta _{1})=\sum_{\substack{ \sigma \in \Lambda _{n}  \\ S_{\sigma
}[0,1]\supseteq \Delta _{1}}}p_{\sigma }=\sum_{\substack{ \sigma \in \Lambda
_{n-\lambda }  \\ S_{\sigma }[0,1]\supseteq \widehat{\Delta _{1}}}}p_{\sigma
}\sum_{\substack{ \omega \text{ path generation }\lambda  \\ S_{\sigma
\omega }[0,1]\supseteq \Delta _{1}}}p_{\omega }.
\end{equation*}%
Let $\mathcal{D}_{1}$ denote those $\sigma \in \Lambda _{n-\lambda }$ such
that $S_{\sigma }[0,1]$ contains $\widehat{\Delta _{1}}$, but intersects $%
\widehat{\Delta _{2}}$ only at the endpoint, and $\mathcal{E}$ denote the
remaining $\sigma \in \Lambda _{n-\lambda }$ such that $S_{\sigma }[0,1]$
contains $\widehat{\Delta _{1}}$. Define $\mathcal{D}_{2}$ similarly. With
this notation we have

\begin{eqnarray*}
P_{n}(\Delta _{1}) &=&\sum_{\sigma \in \mathcal{D}_{1}%
}p_{\sigma }\sum_{\substack{ \omega \text{ path generation }\lambda  \\ %
S_{\sigma \omega }[0,1]\supseteq \Delta _{1}}}p_{\omega } +
\sum_{\sigma \in \mathcal{E}%
}p_{\sigma }\sum_{\substack{ \omega \text{ path generation }\lambda  \\ %
S_{\sigma \omega }[0,1]\supseteq \Delta _{1}}}p_{\omega } \\
&\leq &\sum_{\sigma \in \mathcal{D}_{1}}p_{\sigma }\Gamma_{\Delta_1,\lambda}
+\sum_{\sigma \in \mathcal{E}}p_{\sigma } \\
&\leq &B(\lambda )R_{\lambda }\left( \sum_{\sigma \in \mathcal{D}%
_{1}}p_{\sigma }+\sum_{\sigma \in \mathcal{E}}p_{\sigma }\right) +\left(
\sum_{\sigma \in \mathcal{D}_{2}}p_{\sigma }+\sum_{\sigma \in \mathcal{E}%
}p_{\sigma }\right) \\
&\leq &B(\lambda )R_{\lambda }P_{n-\lambda }(\widehat{\Delta _{1}}%
)+P_{n-\lambda }(\widehat{\Delta _{2}})\text{.}
\end{eqnarray*}%
By the induction assumption, $P_{n-\lambda }(\widehat{\Delta _{1}})\leq
cq^{n-\lambda }P_{n-\lambda }(\widehat{\Delta _{2}})$. Combining this with
the inequalities $B(\lambda )\leq q^{\lambda }/2$ and $c\geq 2/R_{\lambda },$
we obtain%
\begin{eqnarray*}
P_{n}(\Delta _{1}) &\leq &(cq^{n-\lambda }B(\lambda )R_{\lambda
}+1)P_{n-\lambda }(\widehat{\Delta _{2}}) \\
&\leq &cq^{n}R_{\lambda }P_{n-\lambda }(\widehat{\Delta _{2}})\leq
cq^{n}P_{n}(\Delta _{2}),
\end{eqnarray*}%
where the last inequality comes from the definition of $R_{\lambda }$.

Symmetric reasoning gives the other inequality in equation \eqref{ProveReg}.
\end{proof}

\begin{remark}
It follows that for any generalized regular self-similar measure, the formulas for the
local dimensions stated in Theorem \ref{CompFormula} hold.
\end{remark}

\begin{remark}
A measure can be comparable, but not generalized regular. One example is the self-similar measure associated with the IFS $S_0(x)=x/3$, $S_1(x)=x/3+1/3$, $S_2(x)=x/3+2/3$, with probabilities $p_0=p_2=2/5$, $p_1=1/5$.
\end{remark}

\subsection{The Structure of the set of local dimensions}

From Lemma \ref{lem:children}, we see that the characteristic vector of the children
    of $\Delta$ depend only on the characteristic vector of $\Delta$.
As such, we can construct a finite directed graph of characteristic vectors, 
    where we have a directed edge from $\gamma$ to $\beta$ if there is a $\Delta$ 
    with characterstic vector $\gamma$ and a child of $\Delta$ with a characterstic
    vector $\beta$.
This graph is known as the \textit{transition graph }and the paths in the graph, the sequences $(\gamma
_{1},...,\gamma _{J})$ where each $\gamma_{j}$ is a child of $\gamma
_{j-1}, $ are called \textit{admissible paths}.

As in \cite{HHM}, a non-empty subset $\Omega ^{\prime }$ of the set of
characteristic vectors $\Omega $ is called a \textit{loop class }if whenever 
$\sigma ,\beta \in \Omega ^{\prime }$, then there is an admissible path $%
(\gamma _{1},...,\gamma _{J})$ of characteristic vectors $\gamma_i \in \Omega^\prime$ 
    such that $\alpha =\gamma _{1}$ and $\beta =\gamma _{J}$. These are the strongly connected
components of the transition graph. A loop class is called an \textit{%
essential class} if, in addition, whenever $\alpha \in \Omega ^{\prime }$
and $\beta \in \Omega $ is a child of $\alpha $, then $\beta \in \Omega
^{\prime }$. In graph theory terminology, an essential class is a 
strongly connected component that does not have a path going to a vertex outside 
of the component.  These will always exist for finite directed graphs.

In the equicontractive case, Feng in \cite{F2} showed that there was always
a unique essential class. The same is true in the non-equicontractive case,
with a similar argument - it can be shown that there is some $\gamma \in
\Omega $ such that for every $\alpha \in \Omega $ there is an admissible
path beginning with $\alpha $ and ending with $\gamma $. The maximal loop
class containing such a $\gamma $ is the essential class. To produce such a $%
\gamma $ we take the characteristic vector $\gamma =\mathcal{C}_{n}(\Delta
)=(\ell _{n}(\Delta ),V_{n}(\Delta ),r_{n}(\Delta ))$, where $\ell
_{n}(\Delta )$ is minimal, and among all characteristic vectors of minimal
length (in this sense) we take a choice for $\gamma $ where $V_{n}(\Delta )$
is maximal in cardinality. Checking that this choice of $\gamma $ has the
desired properties is similar to Feng's argument, using the slightly revised
definitions. The details are left to the reader.

If $x\in K$ has symbolic representation $(\gamma _{1},\gamma _{2},...)$ and
there is some $N$ so that $\gamma _{j}\in \Omega ^{\prime }$ for all $j\geq
N $ for some loop class $\Omega ^{\prime }$, we say the point $x$ is \textit{%
in the loop class }$\Omega ^{\prime }$. Points in the essential class are
also called \textit{essential points}. Just as shown in \cite[Proposition 3.6]{HHN}%
, the set of essential points has full $\mu $-measure and full Hausdorff $%
H^{s}$-measure when $s$ is the Hausdorff dimension of $K$.

We will say the loop class $\Omega ^{\prime }$ is of \textit{positive type}
if there is an admissible path $\eta $ in $\Omega ^{\prime }$ such that $%
T(\eta )$ has all positive entries. When $K=[0,1]$ the essential class is of
positive type. The proof is the same as \cite[Proposition 4.12]{HHM}.

The main results of \cite[Section 5]{HHM} extend to our setting, with the
same proofs.

\begin{theorem}
Suppose the IFS is of finite type, the self-similar measure $\mu $ is
comparable and $\Omega ^{\prime }$ is a loop class of positive type. The set
of local dimensions of $\mu $ at the points in $\Omega ^{\prime }$ is a
closed interval and the local dimensions of the periodic points in $\Omega
^{\prime }$ are dense in that interval.
\end{theorem}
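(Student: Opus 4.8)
The plan is to follow the structure of the proofs in \cite[Section 5]{HHM}, adapting the steps to the non-equicontractive setting via the formula of Theorem \ref{CompFormula}. First I would fix notation: since $\mu$ is comparable, for any $x$ with symbolic representation $(\gamma_0,\gamma_1,\dots)$ we have $\dim_{loc}\mu(x) = \lim_n \|T(\gamma_0,\dots,\gamma_n)\|/(n\log r_{\min})$ whenever the limit exists, and the upper/lower dimensions are given by the corresponding $\limsup$/$\liminf$. So the whole problem reduces to understanding the possible growth rates of norms of products $T(\gamma_0,\dots,\gamma_n)$ along admissible paths that eventually stay in $\Omega'$. Because $\Omega'$ is a loop class of positive type, there is a fixed admissible loop $\eta$ in $\Omega'$ with $T(\eta)$ strictly positive; I would use $\eta$ as a ``connector'' to splice together arbitrary finite admissible words in $\Omega'$ without losing multiplicative control on the norm (using the fact, noted just before Subsection \ref{LocPeriodic}, that $\|AB\| \ge c\|B\|$ and, when $A$ has a strictly positive entry, also $\|AB\| \sim \|A\|\,\|B\|$ with constants depending only on $A$).

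Next I would establish that the set of attainable local dimensions on $\Omega'$, call it $D(\Omega')$, is contained in a bounded interval: uniform bounds $r_{\min}^{\,Cn} \le \|T(\gamma_0,\dots,\gamma_n)\| \le r_{\min}^{\,cn}$ follow from the finiteness of the set of primitive transition matrices together with the fact that each column of a primitive matrix has a nonzero entry (giving the lower bound) and entries are probabilities $<1$ (giving an upper bound; more care with the precise exponents, but this is routine). Then the main claims are: (i) the periodic-point values are dense in $D(\Omega')$, and (ii) $D(\Omega')$ is closed, hence a closed interval once one also checks it is an interval. For (i): given any essential point $x$ with $\dim_{loc}\mu(x)=\delta$, truncate its symbolic representation at a large generation $n$, close up the resulting admissible word to a cycle $\theta_n$ by inserting the connector $\eta$ (and, if needed, a short admissible path back to the starting vertex, which exists since $\Omega'$ is a loop class), and apply Proposition \ref{periodic}: the value $\log sp(T(\theta_n))/(\beta_n\log r_{\min})$ at the resulting periodic point converges to $\delta$ as $n\to\infty$, because $\|T(\gamma_0,\dots,\gamma_n)\|$ controls $sp(T(\theta_n))$ up to a bounded factor (subadditivity/Gelfand's formula) and the length correction from inserting $\eta$ is $O(1)$ relative to $n$. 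This gives density of periodic values in $D(\Omega')$, and in particular $D(\Omega')$ equals the closure of the periodic values.

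For (ii), that $D(\Omega')$ is a closed interval, I would argue as in \cite{HHM}: the endpoints $\delta_{\min}=\inf D(\Omega')$ and $\delta_{\max}=\sup D(\Omega')$ are themselves attained (take a sequence of periodic points whose values approach the infimum/supremum, then concatenate longer and longer blocks of their periods with interspersed connectors $\eta$ to produce a single point whose $\liminf$, resp.\ $\limsup$, of the dimension quotient equals $\delta_{\min}$, resp.\ $\delta_{\max}$ — here one must be a little careful that the point so constructed actually lies in $\Omega'$, which it does since all blocks and connectors are in $\Omega'$). Then, given any target $\delta\in[\delta_{\min},\delta_{\max}]$, I would construct an essential point with $\dim_{loc}\mu(x)=\delta$ by alternating, in blocks of slowly growing length, between a period realizing a value $\le\delta$ and one realizing a value $\ge\delta$, with connectors $\eta$ between blocks; choosing the block lengths to grow slowly forces the running average $\log\|T(\gamma_0,\dots,\gamma_n)\|/(n\log r_{\min})$ to converge to $\delta$. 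The main obstacle, and the place where the non-equicontractive setting genuinely differs from \cite{HHM}, is bookkeeping the \emph{lengths}: a ``step'' in the symbolic representation corresponds to passing from generation $n-1$ to generation $n$, but the underlying word lengths (number of letters from the alphabet) vary, so one must consistently work with generations rather than word lengths throughout, and verify that inserting the connector $\eta$ (an admissible path of a fixed number of \emph{generations}) contributes a bounded additive error to $n$ — this is exactly what makes the averaging arguments go through, and it is guaranteed because $\Omega$ is finite so $\eta$ has a fixed generation-length and $T(\eta)$ is a fixed matrix. Modulo this careful translation, every step is the same as in \cite[Section 5]{HHM}.
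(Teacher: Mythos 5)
Your proposal is correct and follows essentially the same route as the paper, which itself simply asserts that the arguments of \cite[Section 5]{HHM} carry over verbatim: reduce to norms of transition-matrix products via Theorem \ref{CompFormula}, use a strictly positive connector matrix from the positive-type hypothesis to splice paths, close up truncations to get density of periodic values, and alternate blocks to fill in the interval and attain the endpoints. Your observation that the only genuine adaptation is counting by generations rather than word lengths (so the connector contributes an $O(1)$ correction) is exactly the point the paper relies on implicitly.
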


\begin{corollary}
\label{DimEss}If the IFS is of finite type, the self-similar set $K=[0,1]$
and the self-similar measure $\mu $ is comparable, then the set of local
dimensions at the points in the essential class is a closed interval.
\end{corollary}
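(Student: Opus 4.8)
The plan is to derive Corollary~\ref{DimEss} directly from the preceding Theorem (the one asserting that for a comparable $\mu$ and a loop class of positive type, the set of local dimensions at points in that class is a closed interval). First I would invoke the two structural facts already established in the excerpt: that the essential class $\Omega'$ always exists and is unique for a finite type IFS, and that when $K=[0,1]$ the essential class is of positive type (stated just above the Theorem, with the proof attributed to \cite[Proposition 4.12]{HHM}). With $K=[0,1]$ and $\mu$ comparable by hypothesis, these facts put us exactly in the situation covered by the Theorem, with $\Omega'$ taken to be the essential class.

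The argument is then essentially one sentence: apply the Theorem with $\Omega'$ equal to the essential class. The hypotheses of the Theorem are (i) the IFS is of finite type — assumed; (ii) $\mu$ is comparable — assumed; (iii) $\Omega'$ is a loop class of positive type — the essential class is a loop class by definition, and it is of positive type because $K=[0,1]$. The conclusion of the Theorem is precisely that the set of local dimensions of $\mu$ at the points in $\Omega'$ is a closed interval, which is what the Corollary claims.

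The only point that warrants a word of care — and the nearest thing to an obstacle — is confirming that the ``points in the essential class'' referred to in the Corollary coincide with the ``points in $\Omega'$'' of the Theorem; this is immediate from the definition given in the excerpt of a point being ``in the loop class $\Omega'$'' (namely, that its symbolic representation eventually lies in $\Omega'$), together with uniqueness of the essential class. One might also note, for completeness, that the essential points form a set of full $\mu$-measure and full $H^s$-measure, so the conclusion is not vacuous; but this is not needed for the statement as phrased. I would write the proof as: \emph{Since $K=[0,1]$, the essential class $\Omega'$ is of positive type. As $\mu$ is comparable and the IFS is of finite type, the preceding Theorem applies with this choice of $\Omega'$, giving that the set of local dimensions of $\mu$ at the essential points is a closed interval.}
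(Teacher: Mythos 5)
Your proposal is correct and follows exactly the paper's own argument: the paper's proof likewise just notes that $K=[0,1]$ guarantees the essential class is of positive type and then applies the preceding Theorem. No gaps.
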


\begin{proof}
We have already noted that the assumption that $K=[0,1]$ ensures the
essential class is of positive type.
\end{proof}

\begin{remark}
These results can be partially extended to the case that $\mu $ is not comparable. We
refer the reader to \cite[Section 3]{HHN} for the technical details of how
this is done in the equicontractive case. Similar arguments apply here.
\end{remark}

In many cases, the essential class is either $[0,1]$ or $(0,1),$ so that the
set of local dimensions of the measure is either a closed interval or the
union of a closed interval and one or two isolated points. This is the case,
for instance, in the examples studied in the next section.

\section{Examples}
\label{sec:examples}

\subsection{Oriented Bernoulli convolutions}

In this subsection we will compare the structure and local dimensions of the
(biased) Bernoulli convolutions with contraction factor the inverse of the golden mean,
with that of the self-similar measures where one contraction is oriented in
the opposite direction.

Let 
\begin{align*}
S_0 (x) & = rx & R_0(x) & =  r-rx \\
S_1(x) & = rx + 1-r & R_1(x) & = 1-rx
\end{align*}
where $r = \frac{\sqrt{5}-1}{2} \approx 0.618$ is the inverse of the golden mean 
    satisfying the relation $r^2+r-1=0$. 
We will investigate the self-similar measure $\mu =\mu (p_{0},p_{1})$ arising
from the IFS $\{S_{0},S_{1}\}$ with probabilities $\{p_{0},p_{1}\}$, which we
will denote by $SS,$ and the self-similar measure $\nu =\nu (p_{0},p_{1})$
from the IFS $\{S_{0},R_{1}\}$, denoted $SR$. Both IFS will generate $[0,1]$
as its self-similar set.

The analysis of the self-similar measures arising from the IFS $\{R_0, S_1\}$ is similar to that coming from the $\{S_0, R_1\}$ case, and hence is omitted.
The self-similar measure arising from the IFS $\{R_0, R_1\}$ is generalized regular in 
    the case $p_0 = p_1 = 1/2$ and in that case, the self-similar measure is identical to 
    that from the three other IFS $\{R_0, S_1\}$, $\{S_0, R_1\}$ and $\{S_0, S_1\}$.
The analysis of the biased $\{R_0, R_1\}$ case is left to the interested reader. We note that the essential class is again $(0,1)$.

It follows from Proposition \ref{reg} that $\mu $ is generalized regular if  $%
p_{0}=p_{1}$ and $\nu $ is generalized regular if $p_{0}\leq p_{1}$. The
multifractal analysis of the measure $\mu $ in the regular case was
investigated in \cite{F1}, where it was shown that the set of attainable local dimensions was the interval%
\begin{equation*}
\left[ \frac{\log 1/2}{\log r}-\frac{1}{2},\frac{\log 1/2}{\log r}\right].
\end{equation*}%
For any choice of $p_0 < p_1$ (the non-regular case), it was proven in \cite{HHN} that there is always an isolated point in the set of local dimensions of $\mu$.

The two measures coincide if $p_{0}=p_{1}$, so we will assume otherwise. We
will show the following about these measures.

\begin{theorem}
\label{thm:golden}
For any $p_{0}< 1/2 < p_{1}$, 
\begin{equation*}
\{\dim _{loc}\mu (x):x\in \lbrack 0,1]\}=[a,b]\bigcup \left\{ \frac{\log
p_{0}}{\log r}\right\}
\end{equation*}%
and%
\begin{equation*}
\{\dim _{loc}\nu (x):x\in \lbrack 0,1]\}=[A,B]\bigcup \left\{ \frac{\log
p_{0}}{\log r}\right\} 
\end{equation*}
where $a, A, b$ and $B$ are summarized below:
\[ 
\arraycolsep=1.4pt\def\arraystretch{2.2}
\begin{array}{|l|c|c|}
\hline
\mathrm{Range}  &   a & b \\
\hline
0 < p_0 \leq 1/3    &   a = \frac{\log p_1}{\log r}                                    &                             \\  \cline{1-2}
1/3 < p_0 \leq s  & \frac{\log 2 p_0 p_1}{2 \log r} \leq a < \frac{\log{p_1}}{\log r} & b = \frac{\log p_0 p_1}{2 \log r}   \\ \cline{1-2}
s < p_0 < 1/2 & \frac{\log 2 p_0 p_1}{2 \log r} \leq a \leq 
\frac{\log \frac{p_0 p_1}{2} \left(1- p_0 p_1 + \sqrt {(1+ p_0 p_1) (1-3 p_0 p_1)}\right)}{4\log r}  &  \\ 
\hline
\end{array}
\]
where $1/3 < s = \frac{1}{2} - \frac{\sqrt{6 \sqrt{13}-21}}{6} < r^2$, and 
\[ 
\arraycolsep=1.4pt\def\arraystretch{2.2}
\begin{array}{|l|c|c|}
\hline
\mathrm{Range}  &   A & B \\
\hline
0 < p_0 \leq r^2 & A = \frac{\log{p_1}}{\log r}       & \frac{\log p_0 p_1/r}{2 \log r} \leq B  \leq \frac{\log p_0}{2 \log r}  \\ \hline
r^2 < p_0 < 1/2 & \frac{\log{p_0}}{2 \log r} \leq A \leq \frac{\log p_0 p_1 /r}{2 \log r}  & B  =  \frac{\log p_1}{\log r}  \\
\hline
\end{array}.
\]

In particular: 
\begin{enumerate}
\item $B < b$; $a<b$;
\label{it:1}
\item If $1/3 < p_0 \leq r^2$ then $a<A$; If $p_0 \le 1/3$, then $a=A$;
\label{it:3}

\item If $p_0 \neq r^2$, then $A < B$;
\label{it:5}
\item If $p_{0}=r^{2}$, then $A = B = 1$ and $\dim _{loc}\nu (x)=\{1,2\}$.  Moreover, $\nu$ is an absolutely continuous measure with respect to Lebesgue measure and has density function

\[
f(x)=\left\{ 
\begin{array}{cc}
\frac{2x}{r} & \text{if }0\leq x\leq r \\ 
\frac{2(1-x)}{r^2} & \text{if }r\leq x\leq 1%
\end{array}%
\right. .
\]	

\label{it:4}

\end{enumerate}
\end{theorem}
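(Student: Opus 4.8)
The plan is to compute the relevant transition matrices for both the $SS$ and $SR$ systems, identify the essential class and the periodic points realizing the extreme local dimensions, and then assemble the interval endpoints and the isolated point. Since $r$ satisfies $r^2+r-1=0$ we have $r_{\min}=r$, and every basic interval of generation $n$ has length exactly $r^n$, so both IFS are finite type with $r_{\min}^{-n}r_\sigma \in \{r^{-j}: 0 \le j\}$ restricted to a finite set; I would first enumerate the finitely many characteristic vectors $\Omega$ for each system, draw the transition graph, and locate the unique essential class. For $SS$ this is the classical Bernoulli-convolution computation (as in \cite{F1,HHN}); for $SR$ it must be redone but is structurally parallel. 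In both cases one expects the essential class to be $(0,1)$ and the isolated point to come from the point $x=0$ (equivalently $x=1$), whose symbolic representation is eventually the fixed cycle at the leftmost net interval; by Proposition \ref{periodic} the local dimension there is $\log sp(T(\theta))/(\beta \log r)$, and the relevant $1\times 1$ transition block has entry $p_0$, giving the isolated value $\log p_0/\log r$. One must also check this value is genuinely isolated, i.e. not a limit of essential local dimensions — this follows from the positive-type structure of the essential class and the fact that $\log p_0/\log r$ exceeds $b$ (resp.\ lies outside $[A,B]$), which is part of statement \eqref{it:1}.

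Next I would locate the periodic points achieving $a,b$ (resp.\ $A,B$). By Theorem \ref{CompFormula} (applicable since $\mu$ is generalized regular when $p_0=p_1$ — but here $p_0\neq p_1$, so for $\mu$ one instead invokes the partial extension noted after Corollary \ref{DimEss}, while $\nu$ is generalized regular for $p_0 \le p_1$ and hence comparable by Theorem \ref{regimpliescomp}), the extreme local dimensions within the essential class are approached by periodic points, so it suffices to minimize/maximize $\log sp(T(\theta))/(\beta\log r)$ over cycles $\theta$ in the essential class. Because $\log r < 0$, maximizing the local dimension means minimizing $sp(T(\theta))^{1/\beta}$ and vice versa. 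Candidate cycles are short ones: the length-1 loops at the "central" characteristic vectors and the length-2 alternating loops; the $2\times 2$ matrices that arise have entries products of $p_0,p_1$, and their spectral radii produce the expressions $\log(p_0p_1)/(2\log r)$, $\log(2p_0p_1)/(2\log r)$, $\log(p_1)/\log r$, $\log(p_0p_1/r)/(2\log r)$, $\log(p_0)/(2\log r)$ appearing in the tables. The case analysis on $p_0$ (the thresholds $1/3$, $s$, $r^2$) reflects when one candidate cycle overtakes another as the optimizer — e.g. whether the minimal spectral radius is attained at a length-1 or a length-2 loop.

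For the exact-value claims (items \eqref{it:1}--\eqref{it:4}), I would argue as follows. Items \eqref{it:1}, \eqref{it:3}, \eqref{it:5} are elementary inequalities among the logarithmic expressions once $a,A,b,B$ are pinned down (or bounded) as in the tables; these reduce, after clearing $\log r<0$, to polynomial inequalities in $p_0$ on the relevant subintervals, checked using $p_1=1-p_0$ and $r^2=1-r$. Item \eqref{it:4} is the degenerate case $p_0=r^2$, $p_1=1-r^2=r$: here I would verify directly that the proposed piecewise-linear $f$ satisfies the self-similarity functional equation $\mu = p_0\, \mu\circ S_0^{-1} + p_1\, \nu\circ R_1^{-1}$ for $\nu$ — i.e. check $f(x) = \tfrac{p_0}{r} f(x/r)\mathbf{1}_{[0,r]}(x) + \tfrac{p_1}{r} f((1-x)/r)\mathbf{1}_{[0,1]}$ — a short computation using $r^2=1-r$; uniqueness of the self-similar measure then forces $\nu$ to have this density. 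From an a.c.\ measure with bounded, piecewise-linear, zero-at-the-endpoints density one reads off $\dim_{loc}\nu(x)=1$ for $x\in(0,1)$ and $\dim_{loc}\nu(0)=\dim_{loc}\nu(1)=2$ (the density vanishes linearly at the two endpoints), and indeed $A=B=1$ while the isolated point becomes $\log p_0/\log r = \log r^2/\log r = 2$.

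The main obstacle I anticipate is the $SR$ geometry: because $R_1(x)=1-rx$ reverses orientation, the neighbour sets and the signs of the normalized lengths $L_i$ behave differently than in the orientation-preserving $SS$ case, so the transition graph and the bookkeeping of which cycles lie in the essential class must be worked out carefully and cannot simply be quoted from \cite{F1} or \cite{HHN}. In particular, establishing the precise thresholds $s = \tfrac12 - \tfrac{\sqrt{6\sqrt{13}-21}}{6}$ and $r^2$ — that is, showing exactly where the optimizing cycle changes — will require a careful comparison of the spectral radii of a short list of small matrices, and this is where the bulk of the genuinely new computation lies. I would organize this as a sequence of lemmas: (i) description of $\Omega$ and the essential class for each system; (ii) the two bounding periodic points and their matrices; (iii) the optimality argument showing no other cycle does better, using comparability/positive-type; (iv) the endpoint inequalities; (v) the $p_0=r^2$ density computation.
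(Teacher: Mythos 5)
Your overall framework (enumerate characteristic vectors, identify the essential class $(0,1)$, get the isolated value $\log p_0/\log r$ from the boundary cycles, handle $p_0=r^2$ by verifying the self-similarity identity for the piecewise-linear density and invoking uniqueness) matches the paper, and your handling of the comparability issue ($\nu$ generalized regular by Proposition \ref{reg}, $\mu$ treated via the non-comparable extension from \cite{HHN}) is also the route taken there. The genuine gap is in how you propose to pin down $a,b,A,B$: you assert that the extreme local dimensions are realized (or detected) by a short list of cycles and that the thresholds arise from one short candidate cycle overtaking another. That optimization is over \emph{all} cycles in the essential class, an infinite family, and restricting to short loops is not only unjustified but actually gives a wrong answer in part of the range: for $1/3<p_0\leq s$ the best short cycle is $T_2=T(5,7,5)$ with normalized value $\log p_1/\log r$, yet the theorem asserts the strict inequality $a<\log p_1/\log r$ there. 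The paper proves this strictness precisely by exhibiting the infinite family of cycles $(5,6,3,(5,7)^k)$, computing $sp(T_1T_2^k)$ in closed form and showing it beats $p_1^{2k+3}$ for large $k$ when $p_0>1/3$. Your plan, as stated, would instead conclude $a=\log p_1/\log r$ on $(1/3,s]$.

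The missing ingredient on the other side is a uniform bound valid at \emph{every} essential point, not just periodic ones. The paper gets this from Lemma \ref{lem:pseudonorm}: every essential transition matrix is comparable to a product of $T_1,T_2,T_3$ (resp.\ $T_1',T_2',T_3'$), and the submultiplicative maximal-row-sum and minimal-column-sum pseudo-norms of these three generators give $a\geq \log\bigl(\max(2p_0p_1,p_1^2)\bigr)/(2\log r)$ and $b\leq \log(p_0p_1)/(2\log r)$ (and the analogues for $A,B$); the equality $b=\log(p_0p_1)/(2\log r)$ then follows because the periodic point for $T_3$ attains the bound. Without such a uniform estimate, density of periodic-point dimensions (which anyway needs comparability, so is delicate for $\mu$) does not let you rule out longer cycles doing better, so neither the exact equalities nor the two-sided bounds in the tables would be established. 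Note also that the theorem deliberately leaves $a$ (for $p_0>1/3$) and $A,B$ (in their respective middle ranges) only bounded, exactly because the full cycle optimization is not solved; a proof claiming exact optimizing cycles throughout would be claiming more than is true to the best of current knowledge.
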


In contrast, it follows from \cite{LNR} that regardless of the choice of probabilities, the measure $\mu$ is purely singular to Lebesgue measure.

We remark that $s$ is the choice of $p_0$ where $sp(T_2) = \sqrt{sp(T_2T_3)}$, with the matrices, $T_2$ and $T_3$, defined below.
Note that if $p_{0}=p_{1}^{2}$, then $p_{0}=r^{2}=(3-\sqrt{5})/2$.

In Figure \ref{fig:lc}(a) we give the upper and lower bounds for $a$ and $b$,
    and in (b) we give the upper and lower bounds for $A$ and $B$.

\begin{figure}[tbp]
\begin{center}
\subfigure[SS Case]{\includegraphics[scale=0.5,angle=270]{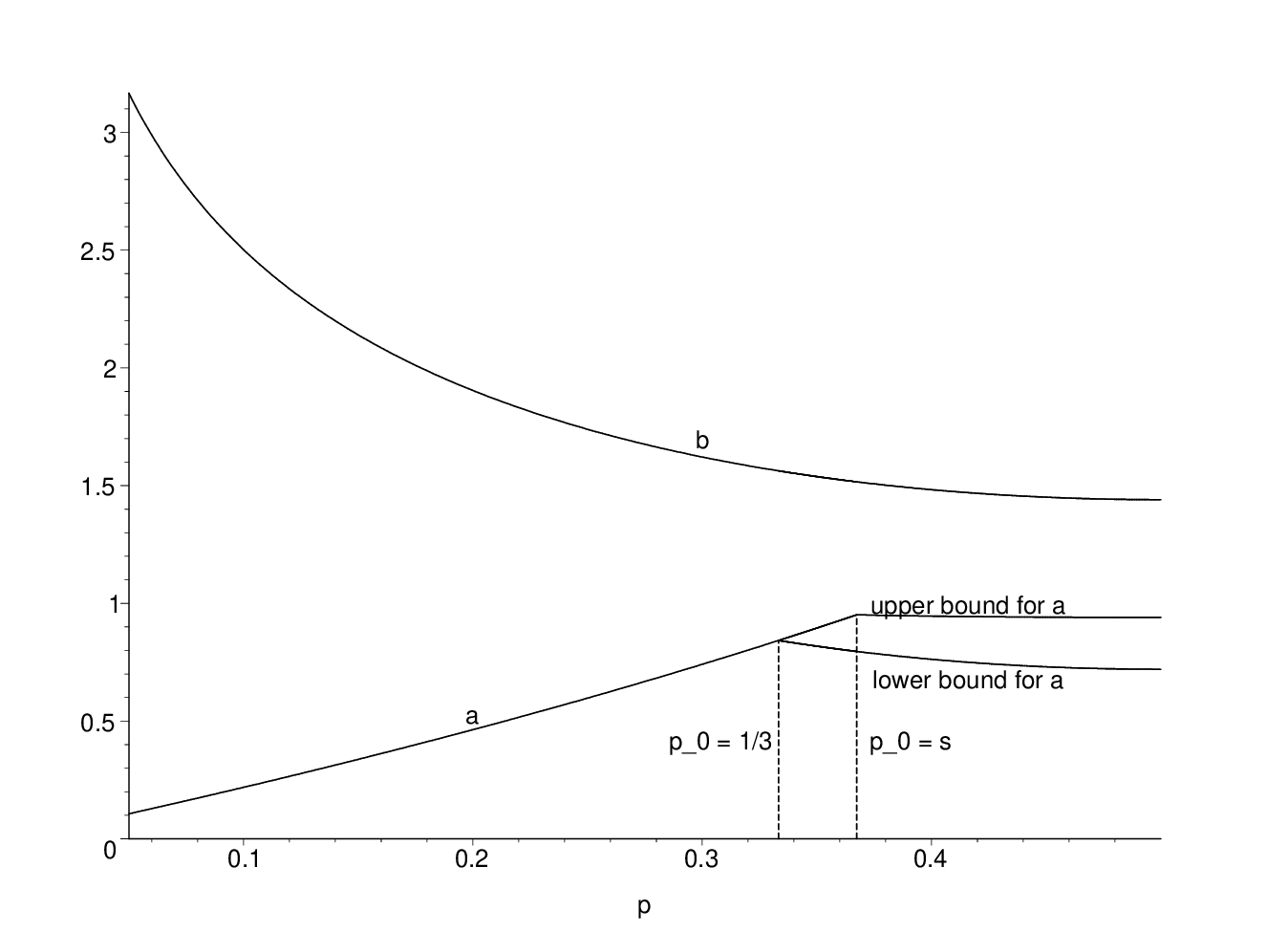}} %
\subfigure[SR Case]{\includegraphics[scale=0.5,angle=270]{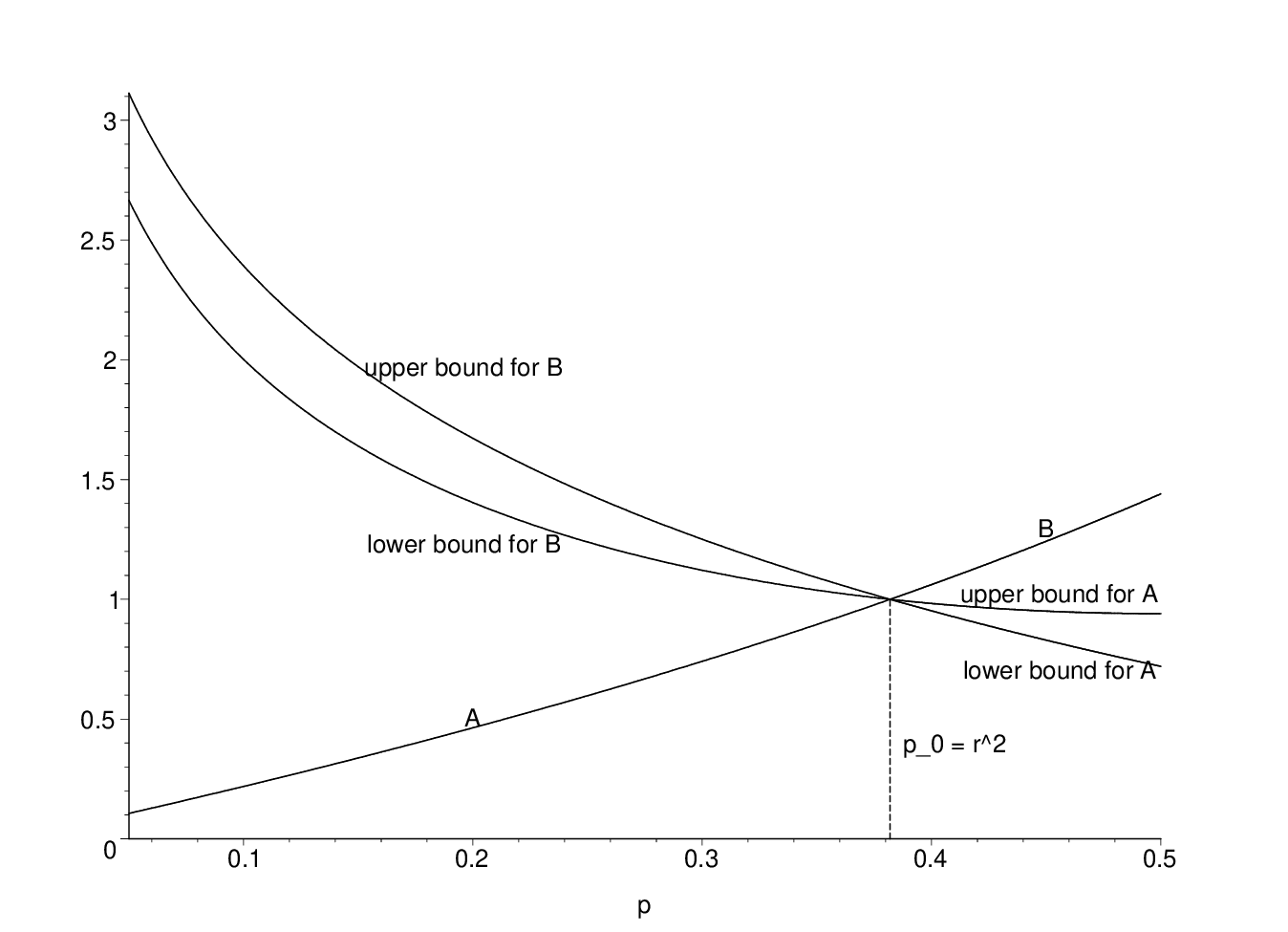}}
\end{center}
\caption{Upper and lower bounds for the range of local dimensions of the
essential class}
\label{fig:lc}
\end{figure}

The proof depends on the transition matrices, so we begin by giving
information about the characteristic vectors and by listing the important
transition matrices for the two measures.

For the IFS$\ SS$ this information can be found in \cite{F1}, but we repeat
the main facts here for the convenience of the reader. There are 7
characteristic vectors, denoted $\{1,2,...,7\},$ with children: $1\rightarrow
2,3,4$; $2\rightarrow 2,3$; $4\rightarrow 3,4$; $3,7\rightarrow 5$; $%
5\rightarrow 3,6,7$ and $6\rightarrow 3$. See Figure \ref{fig:golden} (a) for the transition diagram. The
essential characteristic vectors are $\{3,5,6,7\}$ and the open interval $%
(0,1)$ is the set of essential points. The endpoints $0$ and $1$ have
symbolic representations $(1,2,2,...)$ and $(1,4,4,...)$ respectively. It
can be shown that $T(2,2)=p_{0}$, $T(4,4)=p_{1}$.

Any loop in the essential class is a composition of the 
\begin{equation} \label{Loop}
\text{Loops: }5\rightarrow 6\rightarrow 3\rightarrow 5;5\rightarrow
7\rightarrow 5\text{ and }5\rightarrow 3\rightarrow 5.
\end{equation}%
The corresponding transition matrices are:%
\begin{equation*}
T(5,6,3,5)=%
\begin{bmatrix}
p_{0}^{2}p_{1} & p_{0}^{2}p_{1} \\ 
p_{0}p_{1}^{2} & p_{0}p_{1}^{2}%
\end{bmatrix}%
:=T_{1}
\end{equation*}

\begin{equation*}
T(5,7,5)=%
\begin{bmatrix}
p_{0}p_{1} & p_{0}p_{1} \\ 
0 & p_{1}^{2}%
\end{bmatrix}%
:=T_{2}
\end{equation*}%
\begin{equation*}
T(5,3,5)=%
\begin{bmatrix}
p_{0}^{2} & 0 \\ 
p_{0}p_{1} & p_{0}p_{1}%
\end{bmatrix}%
:=T_{3}.
\end{equation*}

For the IFS $SR$ there are 8 characteristic vectors, with the only
difference being that $5\rightarrow 7,6,8$ and $8\rightarrow 5$. See Figure %
\ref{fig:golden} (b) for the transition diagram. The essential characteristic vectors are $\{3,5,6,7,8\}$
and the set of essential points is again $(0,1)$. The symbolic
representations of $0,1$ are the same as before, and it is still true that $%
T(2,2)=p_{0}$. However, $T(4,4)=p_{0}$. The paths in the essential class are
compositions of the loops $5\rightarrow 6\rightarrow 3\rightarrow 5$; $%
5\rightarrow 7\rightarrow 5$ and $5\rightarrow 8\rightarrow 5$, with
transition matrices%
\begin{equation*}
T(5,6,3,5)=%
\begin{bmatrix}
p_{0}p_{1}^{2} & p_{0}^{2}p_{1} \\ 
p_{0}^{2}p_{1} & p_{0}p_{1}^{2}%
\end{bmatrix}%
:=T_{1}^{\prime }
\end{equation*}

\begin{equation*}
T(5,7,5)=%
\begin{bmatrix}
p_{1}^{2} & p_{0}p_{1} \\ 
0 & p_{0}^{2}%
\end{bmatrix}%
:=T_{2}^{\prime }
\end{equation*}%
\begin{equation*}
T(5,8,5)=%
\begin{bmatrix}
p_{0}^{2} & 0 \\ 
p_{0}p_{1} & p_{1}^{2}%
\end{bmatrix}%
:=T_{3}^{\prime }.
\end{equation*}

\begin{figure}[tbp]
\begin{center}
\subfigure[Transition graph for
SS]{\includegraphics[scale=0.65]{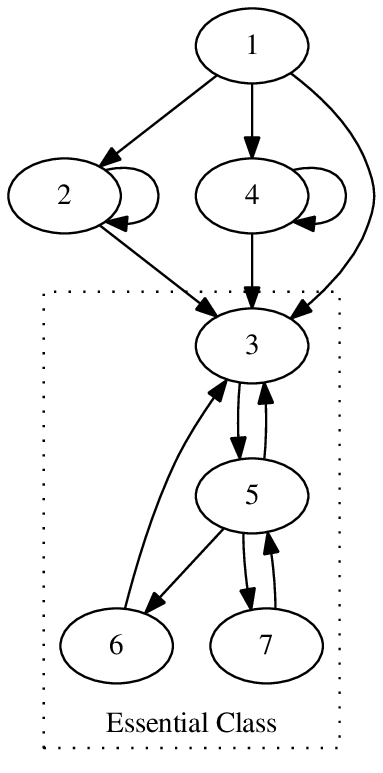}} 
\subfigure[Transition graph
for SR]{\includegraphics[scale=0.65]{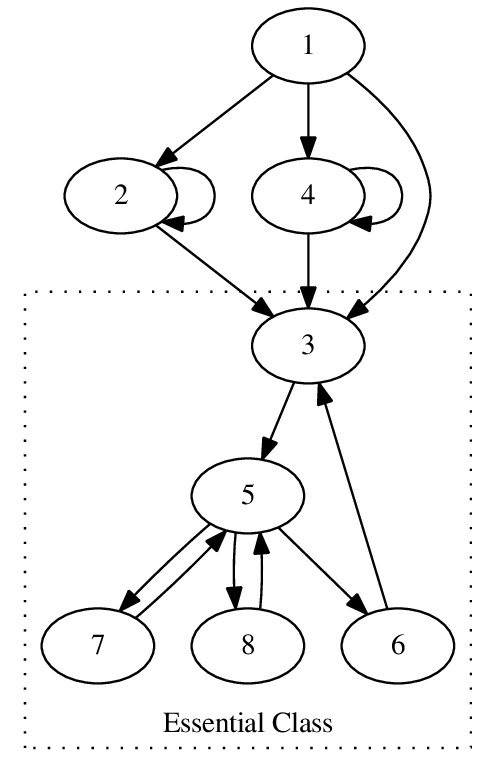}}
\end{center}
\caption{Non-reduced transition graph for SS and SR}
\label{fig:golden}
\end{figure}

Before turning to the proof of the theorem we record the following
elementary fact that was also used in \cite{HHN}.

\begin{lemma}
\label{lem:pseudonorm}
Let $T$ $=(T_{ij})$ be any transition matrix. Denote by $\left\Vert
T\right\Vert _{\min,c}$ and $\left\Vert T\right\Vert _{\max,c}$ the
(pseudo)-norms%
\begin{equation*}
\left\Vert T\right\Vert_{\min,c}=\min_{j}\sum_{i}\left\vert
T_{ij}\right\vert ,\text{ }\left\Vert T\right\Vert_{\max,
c}=\max_{j}\sum_{i}\left\vert T_{ij}\right\vert ,
\end{equation*}%
the minimal and maximal column sums of $T$. Then 
\begin{equation*}
\left\Vert T_{1}T_{2}\right\Vert _{\min,c}\geq \left\Vert T_{1}\right\Vert
_{\min,c}\left\Vert T_{2}\right\Vert _{\min,c}\text{, }\left\Vert
T_{1}T_{2}\right\Vert _{\max,c}\leq \left\Vert T_{1}\right\Vert _{\max,c}
\left\Vert T_{2}\right\Vert _{\max,c}
\end{equation*}%
and%
\begin{equation*}
\left\Vert T\right\Vert _{\min,c}\leq \left\Vert T\right\Vert \leq
N\left\Vert T\right\Vert _{\max,c}
\end{equation*}%
where $N$ is the number of columns of $T$. The analogous statement holds for
minimal and maximal row sums, denoted by $\Vert \cdot \Vert_{\min, r}$ and $%
\Vert \cdot \Vert_{\max, r}$ respectively.
\end{lemma}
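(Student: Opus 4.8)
The plan is to reduce the whole lemma to one observation: a transition matrix has non-negative entries, so $|T_{ij}| = T_{ij}$ throughout and no cancellation ever occurs. Introduce the local notation $c_j(T) = \sum_i T_{ij}$ for the $j$-th column sum, so that $\left\Vert T\right\Vert_{\min,c} = \min_j c_j(T)$, $\left\Vert T\right\Vert_{\max,c} = \max_j c_j(T)$, and $\left\Vert T\right\Vert = \sum_j c_j(T)$. With these in hand every assertion becomes a short manipulation of non-negative sums.

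First I would handle the sub- and super-multiplicativity by computing the column sums of a product directly. For each $j$,
\begin{equation*}
c_j(T_1 T_2) = \sum_i \sum_k (T_1)_{ik}(T_2)_{kj} = \sum_k (T_2)_{kj}\Big(\sum_i (T_1)_{ik}\Big) = \sum_k (T_2)_{kj}\, c_k(T_1),
\end{equation*}
where swapping the order of summation is legitimate precisely because all terms are non-negative. Bounding $c_k(T_1)$ below by $\left\Vert T_1\right\Vert_{\min,c}$ and above by $\left\Vert T_1\right\Vert_{\max,c}$, and using $\sum_k (T_2)_{kj} = c_j(T_2)$, gives
\begin{equation*}
\left\Vert T_1\right\Vert_{\min,c}\, c_j(T_2) \le c_j(T_1 T_2) \le \left\Vert T_1\right\Vert_{\max,c}\, c_j(T_2).
\end{equation*}
Taking the minimum over $j$ in the left inequality (and then bounding $c_j(T_2) \ge \left\Vert T_2\right\Vert_{\min,c}$) and the maximum over $j$ in the right inequality (with $c_j(T_2) \le \left\Vert T_2\right\Vert_{\max,c}$) produces the two displayed inequalities of the lemma.

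Next, for the comparison with the matrix norm, let $N$ be the number of columns of $T$. Then $\left\Vert T\right\Vert = \sum_{j=1}^{N} c_j(T) \ge \min_j c_j(T) = \left\Vert T\right\Vert_{\min,c}$ since all summands are non-negative, while $\left\Vert T\right\Vert = \sum_{j=1}^{N} c_j(T) \le N\max_j c_j(T) = N\left\Vert T\right\Vert_{\max,c}$. Finally, the statements about row sums follow by applying everything just proved to the transpose of $T$ — whose column sums are the row sums of $T$ and which reverses the order in a product — or, equivalently, by repeating the computation above with the roles of rows and columns interchanged. I do not expect a genuine obstacle here; the only points needing a little care are keeping straight which index is the contracted ("inner") dimension of the matrix product, and invoking non-negativity of the entries of a transition matrix explicitly, both to rearrange the double sum and to replace $|T_{ij}|$ by $T_{ij}$.
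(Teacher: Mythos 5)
Your proof is correct; the paper itself states this lemma without proof, recording it only as an elementary fact (also used in \cite{HHN}), and your computation is exactly the standard argument one would supply: writing the $j$-th column sum of $T_1T_2$ as $\sum_k (T_2)_{kj}\,c_k(T_1)$, using non-negativity of the entries (which holds since the entries of a transition matrix are products of probabilities or $0$) to get the two-sided bounds, and comparing $\left\Vert T\right\Vert=\sum_j c_j(T)$ with the extreme column sums. Your handling of the row-sum case via the transpose, noting that transposition reverses products, is also fine, with $N$ then being the number of rows.
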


\begin{proof}[Proof of Theorem \protect\ref{thm:golden}]
As $p_{0}<p_{1}$, Prop. \ref{reg} ensures that $\nu $ (but not $\mu$) is a generalized regular
measure. 

As the essential class is $(0,1)$ in both cases, the set of local dimensions
consists of a closed interval together with the local dimensions at $0$ and $%
1;$ see Corollary \ref{DimEss} for the generalized regular, non-equicontractive case and \cite%
[Corollary 3.15]{HHN} for the non-regular, equicontractive case. For 
    both $\mu$ and $\nu$ we have $[0]=(1,2,2,...)$.
This gives us that
\begin{equation*}
\dim _{loc}\mu (0)=\frac{\log p_{0}}{\log r}=\dim _{loc}\nu (0).
\end{equation*}%
Similarly, $[1]=(1,4,4,...)$ and $T(4,4)=p_{0}$ for the measure $\nu ,$ so
we also have $\dim _{loc}\nu (1)=\log p_{0}/\log r$. This proves that the
set of local dimensions of $\nu $ is the union of a closed interval and the
point $\log p_{0}/\log r$.

However, $\dim _{loc}\mu (1)=\log p_{1}/\log r$. We will next show that this
local dimension is also attained at an essential point of the IFS\ $SS$.
Consider an essential, periodic, boundary point $z$ with
symbolic representations given by period $(5,7,5)$ and hence also by period $%
(5,3,5)$. Applying Proposition \ref{periodic} shows that 
\begin{equation*}
\dim _{loc}\mu (z)=\frac{\log sp(T_{2})}{2\log r}=\frac{\log p_{1}}{\log r}.
\end{equation*}%
Thus, $\dim _{loc}\mu (1)=\dim _{loc}\mu (z)\in \lbrack a,b]$. That
completes the proof of the first statement in the theorem. \newline
 
\noindent \textbf{Case 1: Bounds for $a$} \newline
Since all loops in the essential class of the IFS\ $SS$ are compositions of
the three loops listed in equation \eqref{Loop}, the norm of any transition matrix from the
essential class is comparable to the norm of a suitable product $%
T=T_{{i_{1}}}T_{i_{2}}\cdot \cdot \cdot T_{i_{j}},$ where $i_{l}$ $\in
\{1,2,3\}$. Consequently, appealing to Lemma \ref{lem:pseudonorm} $$\left\Vert T\right\Vert
^{1/n}\leq 2^{1/n}\max \left( \Vert T_{1}\Vert _{\max ,r}^{1/3},\Vert
T_{2}\Vert _{\max ,r}^{1/2},\Vert T_{3}\Vert _{\max ,r}^{1/2}\right) $$ where $%
n$ is the length of the path associated with $T$. But this maximum is $\Vert
T_{2}\Vert _{\max ,r}^{1/2}$, thus for any $x$ in the essential
class $(0,1)$,%
\begin{eqnarray*}
\dim _{loc}\mu (x) &\geq &\frac{\log \left( \max \left( \Vert T_{1}\Vert
_{\max ,r}^{1/3},\Vert T_{2}\Vert _{\max r}^{1/2},\Vert T_{3}\Vert _{\max
,r}^{1/2}\right) \right) }{\log r} \\
&=&\frac{\log \Vert T_{2}\Vert _{\max ,r}^{1/2}}{\log r}=\frac{\log \left(
\max (2p_{0}p_{1},p_{1}^{2})\right) }{2\log r}.
\end{eqnarray*}%
An easy calculation shows that 
\begin{equation*}
a=\min_{x\in (0,1)}\dim _{loc}\mu (x)\geq 
\begin{cases}
\frac{\log p_{1}}{\log r} & 0<p_{0}\leq 1/3 \\ 
\frac{\log \left( 2p_{1}p_{0}\right) }{2\log r} & 1/3\leq p_{0}<1/2%
\end{cases}%
.
\end{equation*}

Of course, the local dimension at any point in the essential class is an
upper bound for $a$. In particular, $a\leq \min_{j}\dim _{loc}\mu (x_{j})$
where we choose for $x_{j}$, $j=1,2,3,4$, the periodic points with periods
given by the cycles associated with $T_{j}$, $j=1,2,3$ and $T_{2}T_{3}$ for $%
j=4$. An easy calculation gives
\begin{equation*}
\dim _{loc}\mu (x_{j})=%
\begin{cases}
\frac{\log sp(T_{j})}{d_{j}\log r} & \text{for }j=1,2,3\text{ where }%
d_{1}=3,d_{2}=d_{3}=2 \\ 
\frac{\log sp(T_{2}T_{3})}{4\log r} & \text{for }j=4%
\end{cases}%
.
\end{equation*}%
It can be checked that $\min_{j}\dim _{loc}\mu (x_{j})$ arises with $j=2$
when $0\leq p_{0}\leq s$ and with $j=4$ when $s\leq p_{0}<1/2$. Thus 
\begin{eqnarray*}
a &\leq &\min_{j\in \{1,2,3,4\}}\dim _{loc}\mu (x_{j})=%
\begin{cases}
\frac{\log sp(T_{2})}{2\log r} & \text{if }0\leq p_{0}\leq s \\ 
\frac{\log sp(T_{2}T_{3})}{4\log r} & \text{if }j=4%
\end{cases}
\\
& = &%
\begin{cases}
\frac{\log p_{1}}{\log r} & \text{if }0\leq p_{0}\leq s \\ 
\frac{\log \frac{p_{0}p_{1}}{2}\left( 1-p_{0}p_{1}+\sqrt{%
(1+p_{0}p_{1})(1-3 p_{0}p_{1})}\right) }{%
4\log r} & \text{if }s\leq p_{0}<1/2%
\end{cases}%
,
\end{eqnarray*}%
as claimed.

We next show that when $1/3<p_{0}\leq r^{2}$, then $a$ is
strictly less than $\frac{\log {p_{1}}}{2\log r}$. For $s<p_{0}\leq r^{2}$
this is clear from the above. 
An easy induction argument shows that 
\begin{equation*}
T_{2}^{k}=%
\begin{bmatrix}
(p_{0}p_{1})^{k} & \sum_{j=1}^{k}p_{0}^{j}p_{1}^{2k-j} \\ 
0 & p_{1}^{2k}%
\end{bmatrix}%
,
\end{equation*}%
thus%
\begin{equation*}
T_{1}T_{2}^{k}=%
\begin{bmatrix}
p_{0}^{k+2}p_{1}^{k+1} & \sum_{j=2}^{k+2}p_{0}^{j}p_{1}^{2k+3-j} \\ 
p_{0}^{k+1}p_{1}^{k+2} & \sum_{j=1}^{k+1}p_{0}^{j}p_{1}^{2k+3-j}%
\end{bmatrix}%
.
\end{equation*}%
It is straightforward to see that 
\begin{eqnarray*}
sp(T_{1}T_{2}^{k}) &=&\sum_{j=1}^{k+2}p_{0}^{j}p_{1}^{2k+3-j}=\frac{p_{0}}{%
p_{1}}\left( \frac{1-\left( \frac{p_{0}}{p_{1}}\right) ^{k+2}}{1-\frac{p_{0}%
}{p_{1}}}\right) p_{1}^{2k+3} \\
&=&\left( 1-\left( \frac{p_{0}}{p_{1}}\right) ^{k+2}\right) \frac{%
p_{0}p_{1}^{2k+3}}{p_{1}-p_{0}}.
\end{eqnarray*}

When $p_0>1/3$, then  $p_{0}>p_{1}-p_{0}$, so for sufficiently large (but fixed) $k$ we have $%
sp(T_{1}T_{2}^{k})>p_{1}^{2k+3}$. Consequently, any periodic essential point 
$z_{k}$ with period $(5,6,3,(5,7)^{k})$ has 
\begin{equation*}
\dim _{loc}\mu (z_{k})\leq \frac{\log sp(T_{1}T_{2}^{k})}{(2k+3)\log r}<%
\frac{\log p_{1}}{\log r}.
\end{equation*}%
Thus $a\leq \min_{k}\dim _{loc}\mu (z_{k})<\log p_{1}/\log r.$ \newline

\noindent \textbf{Case 2: Bounds for $b$} \newline
We similar know that if $T=T_{{i_{1}}}T_{i_{2}}\cdot \cdot \cdot T_{i_{j}},$
where $i_{l}$ $\in \{1,2,3\}$, then $\left\Vert T\right\Vert ^{1/n}\geq \min
\left( \Vert T_{1}\Vert _{\min ,c}^{1/3},\Vert T_{2}\Vert _{\min
,c}^{1/2},\Vert T_{3}\Vert _{\min ,c}^{1/2}\right) $ when $n$ is the length
of the path associated with $T$. But this minimum is $\Vert T_{3}\Vert
_{\min ,c}^{1/2}$ (for all choices of $p_{0}$), consequently, 
\begin{align*}
b& =\max_{x\in (0,1)}\dim _{loc}\mu (x) \\
& \leq \frac{\log \left( \Vert T_{3}\Vert _{\min ,c}^{1/2}\right) }{\log r}=%
\frac{\log p_{0}p_{1}}{2\log r}.
\end{align*}%
Furthermore, $b\geq \max_{j}\dim _{loc}\mu (x_{j})$ where we choose $x_{j}$
as above. The maximum occurs at $x_{3}$, thus 
\begin{equation*}
b\geq \frac{\log sp(T_{3})}{\log r}=\frac{\log p_{0}p_{1}}{2\log r}
\end{equation*}%
and that proves the equality holds. \newline

\noindent \textbf{Case 3: Bounds for $A$} \newline
The arguments are similar for $A$, but using the transition matrices $%
T_{1}^{\prime },$ $T_{2}^{\prime },T_{3}^{\prime }$ and the analogous points 
$x_{j}^{\prime }$. Thus 
\begin{align*}
A& =\min_{x\in (0,1)}\dim _{loc}\nu (x) \\
& \geq \log \left( \max \left( \Vert T_{1}^{\prime }\Vert _{\max
,c}^{1/3},\Vert T_{2}^{\prime }\Vert _{\max c}^{1/2},\Vert T_{3}^{\prime
}\Vert _{\max ,c}^{1/2}\right) \right) /\log r \\
& =\frac{\log \Vert T_{2}^{\prime }\Vert _{\max ,c}^{1/2}}{\log r}=\frac{%
\log (\max \left( p_{0},p_{1}^{2}\right) )}{2\log r}
\end{align*}%
and therefore%
\begin{equation*}
A\geq 
\begin{cases}
\frac{\log p_{1}}{\log r} & \text{if }p_{0}\leq p_{1}^{2}\text{, i.e., }%
0<p_{0}\leq r^{2} \\ 
\frac{\log p_{0}}{2\log r} & \text{if }p_{0}\geq p_{1}^{2}\text{, i.e., }%
r^{2}\leq p_{0}<1/2%
\end{cases}%
.
\end{equation*}%
Further, 
\begin{align*}
A& \leq \min_{j\in \{1,2,3,4\}}\dim _{loc}\nu (x_{j}^{\prime }) \\
& \leq \log \left( \max \left( sp(T_{1}^{\prime })^{1/3},sp(T_{2}^{\prime
})^{1/2},sp(T_{3}^{\prime })^{1/2},sp(T_{2}^{\prime }T_{3}^{\prime
})^{1/4}\right) \right) /\log (r) \\
& =%
\begin{cases}
\frac{\log sp(T_{2}^{\prime })^{1/2}}{\log r} & \text{if }0<p_{0}\leq r^{2}
\\ 
\frac{\log sp(T_{2}^{\prime }T_{3}^{\prime })^{1/4}}{\log r} & \text{if }%
r^{2}\leq p_{0}<1/2%
\end{cases}%
\end{align*}%
and consequently, as claimed, 
\begin{equation*}
A\leq 
\begin{cases}
\frac{\log p_{1}}{\log r} & \text{if }0<p_{0}\leq r^{2} \\ 
\frac{\log p_{0}p_{1}/r}{2\log r} & \text{if }r^{2}\leq p_{0}<1/2%
\end{cases}%
.
\end{equation*}

\noindent \textbf{Case 4: Bounds for $B$} \newline
As with $b$, we have 
\begin{align*}
B& \leq \log \left( \min \left( \Vert T_{1}^{\prime }\Vert _{\min
,c}^{1/3},\Vert T_{2}^{\prime }\Vert _{\min ,c}^{1/2},\Vert T_{3}^{\prime
}\Vert _{\min ,c}^{1/2}\right) \right) /\log r \\
& =\log \left( \Vert T_{3}^{\prime }\Vert _{\min ,c}^{1/2}\right) /\log r \\
& =\log \min \left( p_{0},p_{1}^{2}\right) /2\log r \\
& =%
\begin{cases}
\frac{\log p_{0}}{2\log r} & \text{if }0<p_{0}\leq r^{2} \\ 
\frac{\log p_{1}}{\log r} & \text{if }r^{2}\leq p_{0}<1/2%
\end{cases}%
\end{align*}%
and also 
\begin{align*}
B& \geq \log \left( \min \left( sp(T_{1}^{\prime })^{1/3},sp(T_{2}^{\prime
})^{1/2},sp(T_{3}^{\prime })^{1/2},sp(T_{2}^{\prime }T_{3}^{\prime
})^{1/4}\right) \right) /\log r \\
& =%
\begin{cases}
\frac{\log sp(T_{2}^{\prime }T_{3}^{\prime })^{1/4}}{\log r} & \text{if }%
0<p_{0}\leq r^{2} \\ 
\frac{\log sp(T_{2}^{\prime })^{1/2}}{\log r} & \text{if }r^{2}\leq p_{0}<1/2%
\end{cases}
\\
& =%
\begin{cases}
\frac{\log (p_{0}p_{1}/r)}{2\log r} & \text{if }0<p_{0}\leq r^{2} \\ 
\frac{\log p_{1}}{\log r} & \text{if }r^{2}\leq p_{0}<1/2%
\end{cases}%
.
\end{align*}

Finally, to see that $\nu $ is an absolutely continuous measure when $p_{0}=r^{2}$,  with density function 
$f$ defined in (4), we note that the absolutely continuous measure $\sigma $ given by $\sigma
(A)=\int_{A}f$ is a probability measure that satisfies the self-similarity
equation $\sigma (E)=p_{0}\sigma \circ S_{o}^{-1}(E)+p_{1}\sigma \circ
R_{1}^{-1}(E)$ for all intervals $E$. It follows by uniqueness that $\sigma
=\nu $.

\end{proof}

\begin{remark}
It would be interesting to know if $a<A$ for all $p_{0}\in (1/3, 1/2)$.
\end{remark}

Similar results can be obtained for oriented Bernoulli convolutions
associated with the other simple Pisot numbers. For the $SS$ case, we refer
the reader to \cite{HHN}.

\subsection{Other Examples}

\begin{example}
Consider the IFS generated by the contractions%
\begin{eqnarray*}
S_{i}(x) &=&\frac{x}{N}+\frac{i}{N}\text{ for }i=0,...,N-1, \\
S_{N}(x) &=&\frac{x}{N^{2}}+\frac{i_{0}}{N}
\end{eqnarray*}%
where $i_{0}$ is chosen from $\{1,...,N-2\}$. If, for example, $N=3$ and $%
i_{0}=1$, there are three reduced characteristic vectors 
\begin{eqnarray*}
1 &:&=(1,(0,1)) \\
2 &:&=(1/3,(0,1),(0,1/3)) \\
3 &:&=(2/3,(-1/3,1))
\end{eqnarray*}%
with transitions $1\rightarrow 123112311231$; $2\rightarrow 1231$; $%
3\rightarrow 12311231$. Thus the essential class is the full self-similar
set $[0,1]$. Similar conclusions hold for the general case. If $\mu $ is the
associated self-similar measure given by probabilities with $%
p_{0}=p_{N-1}=\min p_{j}$, then $\mu $ is generalized regular, so that the set of all
local dimensions of $\mu $ is a closed interval.
\end{example}

\begin{example} \label{ExReg}
Consider the IFS 
\begin{equation*}
S_{0}(x)=\frac{x}{2}\text{, }S_{1}(x)=\frac{x}{2}+\frac{1}{4}\text{, }%
S_{2}(x)=\frac{x}{2}+\frac{1}{2}
\end{equation*}%
with probabilities $p_{0}=p_{2}:=p,$ $p_{1}=1-2p$. The associated self-similar measure is regular if and only if $p \leq 1/3$. 
We claim this equicontractive example is also generalized regular if $p \geq 1/3$. Thus our new definition of
generalized regularity covers equicontractive examples that are not regular.

We will also see that the set of attainable local dimensions is a closed interval whenever $p_0 \ge 1/3$, but admits an isolated point (the local dimension at $0$) when $p_0 <1/3$.

There are four reduced characteristic vectors, $1:=(1,(0))$, $2:=(1/2,(0))$, 
$3:=(1/2,(0,1/2))$, $4:=(1/2,(1/2))$, with transitions $1\rightarrow 2,3a,3b,4$%
; $2\rightarrow 2,3$; $3\rightarrow 3a,3b$; $4\rightarrow 3,4$. The only
essential vector is $3$ and the essential class is the subset $(0,1)$. The
transition matrices are%
\begin{eqnarray*}
T(1,2) &=&[p]\text{, }T(1,3a)=[1-2p,p]\text{, }T(1,3b)=[p,1-2p]\text{, }%
T(1,4)=[p] \\
T(2,2) &=&[p]\text{, }T(2,3)=[1-2p,p]\text{, }T(4,4)=[p]\text{, }%
T(4,3)=[1-2p,p]
\end{eqnarray*}%
and 
\begin{equation*}
A:=T(3,3a)=%
\begin{bmatrix}
p & 0 \\ 
p & 1-2p%
\end{bmatrix}%
\text{, }B:=T(3,3b)=%
\begin{bmatrix}
1-2p & p \\ 
0 & p%
\end{bmatrix}%
.
\end{equation*}%
To establish generalized regularity we must study the ratios%
\begin{equation*}
\frac{P_{n+m}(\Delta ^{\prime })}{P_{m}(\Delta )}\sim \frac{\left\Vert
T(\gamma _{0},\gamma _{1},...,\gamma _{m+n})\right\Vert }{\left\Vert
T(\gamma _{0},\gamma _{1},...,\gamma _{m})\right\Vert }
\end{equation*}%
over all $\Delta ^{\prime }\subseteq \Delta ,$ net intervals of generations $m+n$
and $m$ respectively.

If $\Delta ^{\prime }=(1,2,...,2)$ or $(1,4,...,4)$, then it is clear that
this ratio is at least $p^{n}$. So assume otherwise. Without loss of
generality, $\Delta ^{\prime }=(1,2,...,2,3,...,3)$ so that%
\begin{equation*}
T(\Delta ^{\prime
})=T(1,2,...,2,3,...,3)=T(1,2)T(2,2)^{a}T(2,3)A^{k_{1}}B^{j_{1}}\cdot \cdot
\cdot A^{k_N}B^{j_{N}}
\end{equation*}%
where $a+2+\sum_{i=1}^{N}(k_{i}+j_{i})=m+n$, $k_{1},j_{N}\geq 0$ and $%
k_{i},j_{i}>0$ otherwise. We will also assume%
\begin{equation*}
T(\Delta )=T(1,2)T(2,2)^{a}T(2,3)A^{k_{1}}B^{j_{1}}\cdot \cdot \cdot
B^{j_{M}}
\end{equation*}%
where $M\leq N,$ $a+2+\sum_{i=1}^{M}(k_{i}+j_{i})=m$, $k_{1},j_{M}\geq 0$ and $%
k_{i},j_{i}>0$ otherwise. (If, instead, $\Delta =(1,2,...,2)$, the arguments are
similar.) As $T(1,2)$, $T(2,2)$ and $T(2,3)$ have all positive entries, it
will suffice to study%
\begin{equation*}
\frac{\left\Vert A^{k_{1}}B^{j_{1}}\cdot \cdot \cdot
B^{j_{M}}A^{k_{M+1}}\cdot \cdot \cdot B^{j_{N}}\right\Vert }{\left\Vert
A^{k_{1}}B^{j_{1}}\cdot \cdot \cdot B^{j_{M}}\right\Vert }.
\end{equation*}%
We will give the details for the case $j_{i},k_{i}>0$, as the
other cases are similar. Put $\varepsilon =1-2p \le p$. Recall $p\ge 1/3$, hence $\varepsilon >0$.

An easy induction argument shows%
\begin{equation*}
A^{k}=%
\begin{bmatrix}
p^{k} & 0 \\ 
p^{k}(1+C(k)\varepsilon ) & \varepsilon ^{k}%
\end{bmatrix}%
\text{, }B^{j}=%
\begin{bmatrix}
\varepsilon ^{j} & p^{j}(1+D(j)\varepsilon ) \\ 
0 & p^{j}%
\end{bmatrix}%
\end{equation*}%
where $C(k),D(j)\geq 0$. Consequently 
\begin{equation*}
A^{k}B^{j}=%
\begin{bmatrix}
p^{k}\varepsilon ^{j} & p^{j+k}(1+D(j)\varepsilon ) \\ 
p^{k}\varepsilon ^{j}(1+C(k)\varepsilon ) & p^{j+k}(1+G^{\prime
}(j,k)\varepsilon )%
\end{bmatrix}%
\end{equation*}%
where $(1+G^{\prime }(j,k)\epsilon) \ge (1+D(j)\epsilon)(1+C(k)\epsilon)\geq 0$. Moreover, as $\varepsilon \leq p$, we have  $%
p^{k}\varepsilon ^{j}\leq p^{j+k}(1+D(j)\varepsilon )$ and $p^{k}\varepsilon
^{j}(1+C(k)\varepsilon )\leq p^{j+k}(1+G^{\prime }(j,k)\varepsilon )$.

Note that when you multiply together $2\times 2$ matrices, with the first
column entries dominated by the second column entries, we get another matrix
with the same property. Thus further computation shows that 
\begin{equation*}
A^{k_{1}}B^{j_{1}}\cdot \cdot \cdot B^{j_{M}}=%
\begin{bmatrix}
e(m) & p^{m}(1+G_{m}\varepsilon ) \\ 
f(m) & p^{m}(1+G_{m}^{\prime }\varepsilon )%
\end{bmatrix}%
\end{equation*}%
where again $e(m),f(m),G_{m},G_{m}^{\prime }\geq 0$ (and really depend on
the tuple, $(k_{1},j_{1},...,j_{M}),$ not just their sum) with the first
column dominated by the second column. We obtain a similar formula for $%
A^{k_{M+1}}\cdot \cdot \cdot B^{j_{N}}$ and therefore%
\begin{equation*}
A^{k_{1}}B^{j_{1}}\cdot \cdot \cdot B^{j_{M}}A^{k_{M+1}}\cdot \cdot \cdot
B^{j_{N}}=%
\begin{bmatrix}
e(m,n) & p^{m+n}(1+G_{m}\varepsilon )(1+G_{n}^{\prime }\varepsilon
)+e(m)p^{n}(1+G_{n}\varepsilon ) \\ 
f(m,n) & p^{m+n}(1+G_{m}^{\prime }\varepsilon )(1+G_{n}^{\prime }\varepsilon
)+f(m)p^{n}(1+G_{n}\varepsilon )%
\end{bmatrix}%
\end{equation*}%
(again, where the first column entries are dominated by the second column).
It follows that 
\begin{eqnarray*}
\frac{\left\Vert A^{k_{1}}B^{j_{1}}\cdot \cdot \cdot
B^{j_{M}}A^{k_{M+1}}\cdot \cdot \cdot B^{j_{N}}\right\Vert }{\left\Vert
A^{k_{1}}B^{j_{1}}\cdot \cdot \cdot B^{j_{M}}\right\Vert } &\geq &\frac{%
\text{sum of entries of col 2}}{2\cdot \text{sum of entries of col 2}} \\
&\geq &\frac{p^{m+n}(2+G_{m}\varepsilon +G_{m}^{\prime }\varepsilon
)(1+G_{n}^{\prime }\varepsilon )}{2p^{m}(2+G_{m}\varepsilon +G_{m}^{\prime
}\varepsilon )}\geq \frac{p^{n}}{2}.
\end{eqnarray*}

On the other hand, the only edge paths of length $n$ are
 $(0)^{n}$ or $(2)^{n}$ and thus for any $\Delta $, $%
\Gamma _{\Delta ,n}\leq 2p^{n}$. It follows that $B(n)$ is bounded and that
proves the generalized regularity property. (See \ref{regnotation} and \ref{regdefn} for the meaning of this notation and definition of generalized regular.)

We note, moreover, that as the symbolic representation of $0$ is $(1,2,2,2,\dots)$, we have $dim_{loc} \mu (0) = \log p / \log (1/2)$ and this is also the local dimension at the essential point with symbolic representation $(1,3,3,\cdots)$. Consequently, the set of local dimensions is a closed interval when $p \ge 1/3$.

When $p<1/3$, a minimum column sum argument shows that $$\log \mu(x) \le \frac{\log(\min(2p,1-2p))}{\log(1/2)}$$ for all $x \in (0,1)$ and hence the local dimension at $0$ is isolated.
\end{example}

\end{document}